\def \de {\partial}
\def \phi {\varphi}
\def \RNu {\mathbb{R}^{N+1}}
\def \RN {\mathbb{R}^N}
\def \R {\mathbb{R}}
\def \K {\mathscr{K}}
\def \G{\Gamma}
\newcommand{\Ba}{\mathscr B_z^{(a)}}
\newcommand{\paa}{z^a \de_z}
\def \vf{\varphi}
\def \S {\mathscr{S}(\R^{N+1})}
\def \So {\mathscr{S}}
\newcommand{\Rn}{\mathbb R^n}
\newcommand{\p}{\partial}
\newcommand{\la}{\lambda}
\numberwithin{equation}{section}
\newcommand{\beq}{\begin{equation}}
\newcommand{\bea}[1]{\begin{array}{#1} }
\newcommand{\eeq}{ \end{equation}}
\newcommand{\ea}{ \end{array}}
\newcommand{\Rnp}{\mathbb R^{N+1}_+}
\newcommand{\As}{(-\mathscr A)^s}
\newcommand{\sA}{\mathscr A}
\newcommand{\Po}{\mathscr P}
\newcommand{\Lp}{L^p}
\newcommand{\Li}{L^\infty}
\newcommand{\Lii}{L^\infty_0}
\newcommand{\rpp}{\rho_p(\sA)}
\def \tr{\mathrm{tr}}
\newtheorem{theorem}{Theorem}[section]
\newtheorem{lemma}[theorem]{Lemma}
\newtheorem{proposition}[theorem]{Proposition}
\newtheorem{corollary}[theorem]{Corollary}
\newtheorem{remark}[theorem]{Remark}
\newtheorem{definition}[theorem]{Definition}
\theoremstyle{definition}
\numberwithin{equation}{section}
\begin{document}

\title[A class of nonlocal hypoelliptic operators, etc.]{A class of nonlocal hypoelliptic operators\\ and their extensions}

\subjclass[2010]{35H10, 35R11, 47D06}
\keywords{Kolmogorov equations, hypoelliptic operators of H\"ormander type, nonlocal equations, extension problems}

\date{}

\begin{abstract} 
In this paper we study nonlocal equations driven by the fractional powers of hypoelliptic operators in the form
\[
\mathscr K u = \mathscr A u  - \de_t u \overset{def}{=} \operatorname{tr}(Q \nabla^2 u) + <BX,\nabla u> - \de_t u,
\] introduced by H\"ormander in his 1967 hypoellipticity paper. We show that the nonlocal operators $(-\mathscr K)^s$ and $\As$ can be realized as the Dirichlet-to-Neumann map of doubly-degenerate extension problems. We solve such problems in $\Li$, and in $\Lp$ for $1\le p<\infty$ when $\operatorname{tr}(B)\ge 0$. In  forthcoming works we use such calculus to establish some new Sobolev and isoperimetric inequalities. 
\end{abstract}

\author{Nicola Garofalo}

\address{Dipartimento d'Ingegneria Civile e Ambientale (DICEA)\\ Universit\`a di Padova\\ Via Marzolo, 9 - 35131 Padova,  Italy}
\vskip 0.2in
\email{nicola.garofalo@unipd.it}

\thanks{The first author was supported in part by a Progetto SID (Investimento Strategico di Dipartimento) ``Non-local operators in geometry and in free boundary problems, and their connection with the applied sciences", University of Padova, 2017.}

\author{Giulio Tralli}
\address{Dipartimento d'Ingegneria Civile e Ambientale (DICEA)\\ Universit\`a di Padova\\ Via Marzolo, 9 - 35131 Padova,  Italy}
\vskip 0.2in
\email{giulio.tralli@unipd.it}

\maketitle

\tableofcontents

\section{Introduction}\label{intro}

In 1967 H\"ormander proved his celebrated theorem stating that if for smooth vector fields $Y_0, Y_1,...,Y_m$ in $\R^{N+1}$ the Lie algebra generated by them has maximum rank, then the second order partial differential operator $\mathscr L = \sum_{i=1}^m Y_i^2 + Y_0$ is hypoelliptic, see \cite{Ho}. As a motivation to his study, in the opening of his paper the author considered the following class of equations
\begin{equation}\label{K0}
\mathscr K u = \mathscr A u  - \de_t u \overset{def}{=} \operatorname{tr}(Q \nabla^2 u) + <BX,\nabla u> - \de_t u = 0,
\end{equation}
and showed that $\K$ is hypoelliptic if and only if the covariance matrix
\begin{equation}\label{Kt}
K(t) = \frac 1t \int_0^t e^{sB} Q e^{s B^\star} ds
\end{equation} 
is invertible, i.e., $\operatorname{det} K(t) >0$ for every $t>0$. We note that the strict positivity of $K(t)$ is equivalent to the finite rank condition on the Lie algebra. 
In \eqref{K0} $Q$ and $B$ are $N\times N$ matrices with real, constant coefficients, with $Q\ge 0$, $Q = Q^\star$. We have denoted by $X$ the variable in $\R^N$, and thus $(X,t)\in \R^{N+1}$, and by $A^\star$ the transpose of a matrix $A$.

The class of operators \eqref{K0} includes several examples of interest in analysis, physics and the applied sciences. The simplest one is of course the ubiquitous heat equation, corresponding to the nondegenerate case when $\sA = \Delta$ ($Q = I_N$, $B = O_N$). When $\sA = \Delta - <X,\nabla>$ ($Q=I_N$, $B = - I_N$) one has the Ornstein-Uhlenbeck operator, of great interest in probability, see e.g. \cite{DZ}. Our primary motivating example, however, is the degenerate Kolmogorov operator, which arose in the seminal paper \cite{Kol} on Brownian motion and the theory of gases. Denote by $(X,t)=(v,x,t)$ the generic point in $\R^{N+1}$ with $N=2n$. With the choices
$Q = \begin{pmatrix} I_n & 0_n\\ 0_n& 0_n\end{pmatrix}$, and $B = \begin{pmatrix} 0_n & 0_n\\ I_n & 0_n\end{pmatrix}$, the operator $\K$ in \eqref{K0} becomes
\begin{equation}\label{kolmo0}
\K u = \Delta_v u+ <v,\nabla_x u > - \de_t u.
\end{equation}
Clearly, \eqref{kolmo0} fails to be parabolic since it is missing the diffusive term $\Delta_x u$, but it is easily seen to satisfy H\"ormander's finite rank condition, and thus $\K$ is hypoelliptic. We note that, remarkably, Kolmogorov had already proved this fact thirty years prior to \cite{Ho} by exhibiting the following explicit fundamental solution for \eqref{kolmo0} 
\begin{equation}\label{kolmofs0}
p(X,Y,t) = \frac{c_n}{t^{2n}} \exp\left\{- \frac 1t \left(|v-w|^2 + \frac 3t <v-w,y-x-tv> + \frac{3}{t^2} |x- y +tv|^2\right)\right\}.
\end{equation}
Since \eqref{kolmofs0} is $C^\infty$ off the diagonal, it follows that \eqref{kolmo0} is hypoelliptic.

The class of partial differential operators \eqref{K0} has been intensively studied over the past thirty years, and thanks to the work of many people a lot is known about it. Nonetheless, some fundamental aspects presently remain elusive, such as Sobolev or isoperimetric inequalities, a Calder\'on-Zygmund theory (but for some interesting progress in this direction, see \cite{BCLP}), and one of local and nonlocal minimal surfaces. The difficulties with these hypoelliptic operators stem from the fact that the drift term in \eqref{K0} mixes the variables inextricably and this complicates the geometry considerably. This is already evident at the level of the model equation \eqref{kolmo0} and its probability transition kernel \eqref{kolmofs0}. Unlike what happens for H\"ormander operators of the form $\sum_{i=1}^m Y_i^2 - \p_t$ (see, e.g., \cite{VSC}, \cite{Gjems} and the references therein), where there is only one intrinsic distance $d(x,y)$ that controls the geometry for all times, for \eqref{kolmofs0} there is a one-parameter family of non-symmetric pseudo-distances $d_t(X,Y)$ that drive the evolution. 
Such intertwined geometries are reflected in the large time behaviour of H\"ormander's fundamental solution of \eqref{K0}. In many respects such behaviour parallels the diverse situations that one encounters in the Riemannian setting when passing from positive to negative curvature. In general, the relevant volume function is not power-like in $t$ and need not be doubling. A detailed description of the different behaviours is contained in \cite{GT}.

Having said this, we turn to the focus of the present note. Our primary objective is to establish a sufficiently robust nonlocal calculus for a subclass of the hypoelliptic operators \eqref{K0} that includes \eqref{kolmo0} as a special case. In the forthcoming works \cite{GT}, \cite{GTiso}, starting from such calculus, we will establish some new Sobolev and isoperimetric inequalities. To be specific, our focus is on operators which, besides H\"ormander's hypoellipticity condition $K(t)>0$ for all $t>0$, also satisfy the assumption on the drift
\begin{equation}\label{trace}
\operatorname{tr} B \ge 0.
\end{equation} 
Let us notice explicitly that such hypothesis includes, as special cases, the heat equation or \eqref{kolmo0}, for both of which we have $\operatorname{tr} B = 0$. But it leaves out examples such as the Ornstein-Uhlenbeck operator mentioned above, or the equation 
\begin{equation*}
\K u = \de_{xx} u  - 2(x+y) \de_x u + x \de_y u - \de_t u = 0,
\end{equation*}
which arises in the Smoluchowski-Kramers' approximation of Brownian motion with friction, see \cite{Bri} and \cite{Fre}. For the former we have $B = - I_N$, while for the latter one has $B = \begin{pmatrix} -2 & -2\\ 1 & 0\end{pmatrix}$. 

To understand the role of \eqref{trace} in the present work we recall that the Cauchy problem $\K u = 0$ in $\R^{N+1}_+$, $u(X,0) = f$, admits a unique solution for $f\in \So$, see Theorem \ref{T:hor}. This generates a strongly continuous semigroup $\{P_t\}_{t>0}$ on $\Lp$ defined by 
\[
P_t f(X) = \int_{\RN} p(X,Y,t) f(Y) dt,
\]
where $p(X,Y,t)$ is the transition distribution constructed by H\"ormander in \cite{Ho}, see also \eqref{PtKt}. However, the spectral properties of this semigroup dramatically change depending on the sign of $\tr\ B$. The assumption \eqref{trace} guarantees that $\{P_t\}_{t>0}$ is contractive on $\Lp$, and this aspect plays a pervasive role in the present paper. We will return to the analysis of \eqref{K0} in the case $\operatorname{tr} B<0$ in a forthcoming study. In this work, regardless of the sign of $\operatorname{tr} B$, we solve the extension problem in $L^\infty$ (see Theorem \ref{T:epKinfty} and Theorem \ref{T:extLinfty}). For the $L^p$-case, instead, we shall assume \eqref{trace}.

\vskip 0.4cm

To put our results in the proper perspective we mention that the study of nonlocal equations is very classical, stretching back to the seminal works of M. Riesz \cite{R1, R2} on the fractional powers of the Laplacian $(-\Delta)^s$ and the wave operator $(\de_{tt} - \Delta)^s$. A semigroup based fractional calculus for closed linear operators was first introduced by Bochner in his visionary note \cite{Bo}, see also Feller's work \cite{Fe}. Phillips showed in \cite{Phi} that one could embed these approaches into a more
general one based on the Kolmogorov-Levy representation theorem for infinitely divisible distributions. In \cite{B} Balakrishnan introduced a new fractional calculus that extended 
the previous contributions to situations in which the relevant operator does not necessarily generate a semigroup. For a given closed operator $A$ on a Banach space $X$, under the assumption that $||\la R(\la,A)||\le M$ for $\la>0$ (there exist operators $A$ which satisfy such hypothesis but do not generate a semigroup), he constructed the fractional powers of $A$ by the beautiful formula 
\begin{equation}\label{bala0}
A^\alpha x  = - \frac{\sin(\pi \alpha)}{\pi} \int_{0}^{\infty} \lambda^{\alpha-1} R(\lambda,A) A x d\la,\ \ \ \ \ \ \ \ 0<\Re \alpha < 1,
\end{equation}
see \cite[(2.1)]{B}. When $A$ does generate a strongly continuous semigroup $\{T(t)\}_{t>0}$ on $X$, then it is well-known that \eqref{bala0} can also be expressed as follows
\begin{equation}\label{bala02}
A^\alpha x  = - \frac{\alpha}{\G(1-\alpha)} \int_{0}^{\infty} t^{-\alpha-1} [T(t) x - x] dt,\ \ \ \ \ \ \ \ 0<\Re \alpha < 1.
\end{equation}
Similarly to the existing literature in the classical setting $\K = \Delta - \p_t$, see \cite[(5.84) on p. 120]{SKM}, Balakrishnan's formula \eqref{bala02} is the starting point of our analysis. The gist of our work is to develop those mathematical tools that allow to successfully push the ideas in \cite{CS}
to the class of degenerate hypoelliptic equations  \eqref{K0}.
 
With $\sA$ as in \eqref{K0}, we use \eqref{bala02} and the semigroup $\{P_t\}_{t>0}$ to define the fractional powers on functions $f\in \So(\RN)$ by the pointwise formula 
\begin{equation}\label{As0}
\As f(X) = - \frac{s}{\G(1-s)} \int_{0}^{\infty} t^{-s-1} [P_t f(X) - f(X)] dt,\ \ \ \ \ \ \ \ 0<s < 1.
\end{equation}  
Since we also want to have a nonlocal calculus for the time-dependent operator $\K$, we introduce on a function $u\in \mathscr S(\RNu)$ what we call the \emph{H\"ormander evolutive semigroup} 
$$P^\K_\tau u(X,t) \overset{def}{=} \int_{\R^N} p(X,Y,\tau) u(Y,t-\tau) dY,\qquad (X,t)\in\RNu,\,\,\tau>0.$$
The notion of \emph{evolution semigroup} is well-known in dynamical systems, and the reader should see \cite{CL} in this respect. 
Using $\{P^\K_\tau\}_{\tau>0}$ we define on a function $u\in \mathscr S(\RNu)$,
\begin{equation}\label{As02}
\left(-\K\right)^s u(X,t) = - \frac{s}{\G(1-s)} \int_{0}^{\infty} \tau^{-s-1} [P^\K_\tau u(X,t) - u(X,t)] d\tau,\ \ \ \ \ \ \ \ 0<s < 1.
\end{equation}  

Having in mind the development of the program mentioned above, with definitions \eqref{As0} and \eqref{As02} in hand we turn the attention to the basic question of characterizing these nonlocal operators as traces of suitable Bessel processes. In probability this was first introduced by Molchanov and Ostrovskii in \cite{MO} for symmetric stable processes. 
But it was not until the celebrated 2007 extension paper of Caffarelli and Silvestre \cite{CS} that such idea became a powerful tool in analysis and geometry. Their work has allowed to convert problems involving the nonlocal operator $(-\Delta)^s$ in $\Rn$, into problems in $\R^{n}\times(0,\infty)$ involving the (local) partial differential equation of degenerate type
$$\begin{cases}
\operatorname{div}_{(x,z)}\left(z^a \nabla_{(x,z)} U\right) = 0,
\\
U(x,0) = u(x).
\end{cases}$$
One remarkable aspect of this procedure is represented by the limiting relation
$$- \frac{2^{-a} \Gamma\left(\frac{1-a}2\right)}{\Gamma\left(\frac{1+a}2\right)}  \underset{z\to 0^+}{\lim} \paa U(x,z) = (-\Delta)^s u(x),$$
where the parameters $0<s<1$ and $a\in (-1,1)$ are connected by the equation $a = 1-2s$ (hereafter, for $\ell>0$ we indicate with $\G(\ell) = \int_0^\infty \tau^\ell e^{-\tau} \frac{d\tau}\tau$ Euler's gamma function evaluated at $\ell$). 
 
In the present paper we establish results analogous (at least on the formal level) to Caffarelli and Silvestre's (see also \cite{ST10}) for the nonlocal operators $(-\mathscr K)^s$ and $\As$. Precisely, we first solve the extension problem for \eqref{As02}, and then we combine it with Bochner's subordination to obtain a corresponding solution for \eqref{As0}. The construction of the relevant Poisson kernels is based on fairly explicit formulas which involve H\"ormander's fundamental solution \eqref{PtKt}, and provide a flexible and robust tool for the theory developed in \cite{GT}, \cite{GTiso}. 

As a final comment we mention that the novelty of our work is in the treatment of the genuinely degenerate hypoelliptic operators in \eqref{K0} when $Q\ge 0$ and $B\not = O_N$. In fact, in the nondegenerate case when $\sA = \Delta$, and thus no drift is present, in a remarkable 1968 paper Frank Jones first solved the extension problem for the fractional heat equation $(\p_t -\Delta)^{1/2}$ and constructed an explicit Poisson kernel for the extension operator, see \cite[(2.1) in Sec.2]{Jo} and the subsequent formulas. Such Poisson kernel was recently generalised by Nystr\"om and Sande \cite{NS} and Stinga and Torrea \cite{ST} to the case of fractional powers with $s\not= 1/2$. Our results can be seen as a far-reaching extension of these results to the much larger class \eqref{K0}, under the hypothesis \eqref{trace}. In connection with extension problems for sub-Laplacians in Carnot groups and for sum of squares of H\"ormander vector fields we mention \cite{FF, G18b}, and \cite{FGMT} for a related but more geometric result in the CR setting. 

\vskip 0.4cm

The organization of the paper is as follows. In Section \ref{S:ks} we collect some well-known properties of the H\"ormander semigroup which are used throughout the rest of the paper. We also introduce the evolutive H\"ormander semigroup $\{P^\K_\tau\}_{\tau>0}$ and extend to the latter the results for $\{P_t\}_{t>0}$. This allows us to define in Section \ref{S:fracK} the fractional powers $(-\K)^s$ and the related extension problem, see Definition \ref{D:epKa}. In Proposition \ref{P:Ga} we introduce the Neumann fundamental solution, and in Definition \ref{D:PaK} the Poisson kernel for the extension problem. Section \ref{S:epKs} is devoted to the proof of Theorems \ref{T:epKinfty} and \ref{T:epKL2}. With these two results we prove the validity of the Dirichlet-to-Neumann condition respectively in $L^\infty$ and, under the additional assumption \eqref{trace}, in $L^p$. In Section \ref{S:fracA} we study the nonlocal operator $(- \mathscr A)^s$, where $\mathscr A$ is the diffusive part in \eqref{K0}. The main result of this section is Theorem \ref{T:extLinfty}, where we solve the relevant extension problem. 

\subsection{Notation} 

All the function spaces in this paper are based either on $\RN$ or on $\RNu$. For spaces on $\RN$ we will routinely avoid reference to the ambient space, for those on $\RNu$ we will explicitly mention the ambient space. For instance, the Schwartz space of rapidly decreasing functions in $\RN$ will be denoted by $\So$, whereas $\S$ denotes the Schwartz space in $\RNu$. The same convention applies to the $\Lp$-spaces for $1\le p \le \infty$. The norm in $\Lp$ will be denoted by $||\cdot||_p$ whenever there is no confusion with the ambient space. We will indicate with $\Lii$ the Banach space of the $f\in C(\RN)$ such that $\underset{|X|\to \infty}{\lim}\ |f(X)| = 0$ with the norm $||\cdot||_\infty$. If $T:\Lp\to L^q$ is a bounded linear map, we will indicate with $||T||_{p\to q}$ its operator norm. If $ q =p$, the spectrum of $T$ on $\Lp$ will be denoted by $\sigma_p(T)$, the resolvent set by $\rho_p(T)$, the resolvent operator by $R(\la,T) = (\la I - T)^{-1}$. We adopt the convention that $a/\infty = 0$ for any $a\in \R$.


\section{The H\"ormander semigroup $\{P_t\}_{t>0}$}\label{S:ks}

In this section we collect some well-known properties of the semigroup associated with \eqref{K0} which will be used throughout the rest of the paper. The reader should see the  works \cite{Ku}, \cite{K82}, \cite{GL}, \cite{LP}, \cite{L}, \cite{Pritorino}, \cite{Me}, \cite{LB}, \cite{Pristudia} and \cite{AB}. 
As we have mentioned in the introduction, the starting point is the following result from \cite{Ho}.

\begin{theorem}[H\"ormander]\label{T:hor}
Given $Q$ and $B$ as in \eqref{K0}, for every $t> 0$ consider the \emph{covariance matrix} \eqref{Kt}.
Then, the operator $\K$ is hypoelliptic if and only if $\det K(t)>0$ for every $t>0$.
In such case, given $f \in \So$, the unique solution to the Cauchy problem $\K u = 0$ in $\R^{N+1}_+$, $u(X,0) = f$, is given by
$u(X,t) = \int_{\R^N} p(X,Y,t) f(Y) dY,$
where
\begin{equation}\label{PtKt}
p(X,Y,t) = (4\pi)^{- \frac N2} \left(\operatorname{det}(t K(t))\right)^{-1/2} \exp\left( - \frac{<K(t)^{-1}(Y-e^{tB} X ),Y-e^{tB} X >}{4t}\right).
\end{equation}
\end{theorem}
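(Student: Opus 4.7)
My approach is to derive the explicit formula \eqref{PtKt} by Fourier transform in the spatial variable, and to obtain the hypoellipticity equivalence as a consequence of H\"ormander's 1967 theorem once the nondegeneracy of $K(t)$ is translated into the rank condition for the vector fields associated with $\K$.

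For the construction I would take the Fourier transform in $X$. Using the identities $\mathcal{F}(\operatorname{tr}(Q\nabla^2 u))(\xi) = -\langle Q\xi,\xi\rangle\,\hat u(\xi)$ and $\mathcal{F}(\langle BX,\nabla u\rangle)(\xi) = -\operatorname{tr}(B)\,\hat u(\xi) - \langle B^\star\xi, \nabla_\xi \hat u(\xi)\rangle$, the Cauchy problem $\K u=0$, $u(\cdot,0)=f$, becomes the first-order transport equation
\[
\partial_t \hat u + \langle B^\star\xi,\nabla_\xi \hat u\rangle = -\bigl[\langle Q\xi,\xi\rangle + \operatorname{tr}(B)\bigr]\hat u,\qquad \hat u(\xi,0)=\hat f(\xi).
\]
Its characteristics are $\xi(s)=e^{sB^\star}\xi_0$, and integrating along them reproduces precisely $\langle tK(t)\xi_0,\xi_0\rangle$ in the exponent by the very definition \eqref{Kt}; this yields
\[
\hat u(\xi,t) = e^{-t\operatorname{tr}(B)}\,\hat f(e^{-tB^\star}\xi)\,\exp\bigl(-t\langle K(t)e^{-tB^\star}\xi,e^{-tB^\star}\xi\rangle\bigr).
\]
When $\det K(t)>0$ the exponential is a genuine Gaussian in $\xi$, and Fourier inversion produces a Gaussian convolution kernel in $X$; the change of variable $Z=e^{tB}Y$ (whose Jacobian $e^{t\operatorname{tr} B}$ cancels the prefactor) reproduces \eqref{PtKt}, and uniqueness in $\So$ is automatic from this Fourier representation.

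For the hypoellipticity equivalence, the first step is to translate the nondegeneracy of $K(t)$ into the Kalman-type rank condition $\operatorname{rank}[\sigma\,|\,B\sigma\,|\cdots|\,B^{N-1}\sigma]=N$, where $\sigma$ is any real square root of $Q$. Writing $\langle K(t)v,v\rangle=\tfrac{1}{t}\int_0^t |\sigma^\star e^{sB^\star}v|^2\,ds$, the nonnegativity of the integrand combined with its real analyticity in $s$ forces $\sigma^\star e^{sB^\star}v\equiv 0$ on $\R$ whenever $v\in\ker K(t)$ for some $t>0$; Taylor expansion at $s=0$ then gives $v\perp\operatorname{Range}[\sigma,B\sigma,\ldots,B^{N-1}\sigma]$. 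Next, in the associated decomposition $\K=\sum_{i=1}^m Y_i^2 + Y_0$ in which $Y_i$ is the constant vector field corresponding to the $i$-th column of $\sigma$ and $Y_0=\langle BX,\nabla\rangle-\partial_t$, a direct bracket computation gives $[Y_0,Y_i]$ equal to the constant field $-B\sigma_i$, so iterated brackets with $Y_0$ produce the constant fields $B^k\sigma_i$ for every $k\geq 0$; hence the Kalman condition is exactly H\"ormander's finite-rank condition for $\K$.

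With this translation at hand, the statement reduces to H\"ormander's 1967 theorem in \cite{Ho}. The principal technical obstacle, in my view, is precisely this control-theoretic equivalence between $\det K(t)>0$ for every $t>0$ and the Kalman rank condition: it is not purely formal and rests on the analyticity argument above, whereas the Fourier derivation of \eqref{PtKt} is careful bookkeeping but requires no genuinely new idea.
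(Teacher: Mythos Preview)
The paper does not give its own proof of this theorem: it is stated as a result taken directly from H\"ormander's 1967 paper \cite{Ho} and used as the starting point for everything that follows. So there is no proof in the paper to compare your proposal against.

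That said, your outline is correct and is essentially the standard route to this result. Two remarks. First, the Fourier representation you derive is exactly the formula the paper records (in its $2\pi$-convention) as \eqref{FTPt} in the proof of Lemma~\ref{L:invS}, so your computation is consistent with what the paper uses downstream; the identity $tK(t)=e^{tB}C(t)e^{tB^\star}$ in \eqref{KC} is precisely what reconciles your exponent $t\langle K(t)e^{-tB^\star}\xi,e^{-tB^\star}\xi\rangle$ with the paper's $\langle C(t)\xi,\xi\rangle$. Second, the equivalence you prove between $\det K(t)>0$, the Kalman rank condition, and H\"ormander's bracket condition is exactly the circle of equivalences the paper alludes to when it writes that ``the hypoellipticity of \eqref{K0} can be expressed in a number of different ways, see \cite{LP}''. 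Your analyticity argument for $\ker K(t)\Rightarrow\sigma^\star(B^\star)^k v=0$ and the bracket computation $[Y_0,Y_i]=-B\sigma_i$ are both correct. The only thing I would add for completeness is that the converse direction of the hypoellipticity equivalence (if $\det K(t_0)=0$ for some $t_0$ then $\K$ is \emph{not} hypoelliptic) is immediate once you observe that the Fourier multiplier $e^{-t\langle K(t)\eta,\eta\rangle}$ then fails to be rapidly decreasing, so the transition kernel is not smooth; you implicitly use this but do not state it.
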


Throughout the paper we always assume that $K(t)>0$ for every $t>0$. One should keep in mind that the hypoellipticity of \eqref{K0} can be expressed in a number of different ways, see \cite{LP}. It was noted in the same paper that the operator $\K$ is invariant with respect to the following non-Abelian group law $(X,s)\circ (Y,t) = (Y+ e^{-tB}X,s+t)$. Endowed with the latter the space $\R^{N+1}$ becomes a non-Abelian Lie group. 
In what follows it will be convenient to also have the following alternative expression for the kernel $p(X,Y,t)$ in \eqref{PtKt} (see, e.g., \cite{Ku, LP}):
\begin{equation}\label{pcomeG}
p(X,Y,t)= (4\pi)^{- \frac N2} e^{- t \operatorname{tr}(B)} \left(\operatorname{det}(C(t))\right)^{-1/2} \exp\big(- \frac{<C(t)^{-1} X-e^{-tB} Y,X-e^{-tB} Y>}{4}\big),
\end{equation}
where $C(t) = \int_0^t e^{-sB} Q e^{-sB^\star} ds$.
Notice that $C(t)^\star = C(t)$ and since
\begin{equation}\label{KC}
t K(t) = e^{tB} C(t) e^{tB^\star},
\end{equation}
it is clear that $K(t)>0$ if and only if $C(t)>0$. Now, given a function $f\in \So$ we define 
\begin{align}\label{pt}
P_t f(X) \overset{def}{=} \int_{\R^N} p(X,Y,t) f(Y) dY.
\end{align}
In the next two lemmas we collect the main properties of $\{P_t\}_{t>0}$. These results are well-known to the experts, but we include them for completeness.

\begin{lemma}\label{L:invS}
For any $t>0$ we have: 
\begin{itemize}
\item[(a)] $\sA(\So)\subset \So$ and $P_t(\So) \subset \So$;
\item[(b)] For any $f\in \So$ and $X\in \RN$ one has $\frac{\de}{\de t} P_t f(X) = \mathscr A P_t f(X)$; 
\item[(c)] For every $f\in \So$ and $X\in \RN$ the commutation property is true
\begin{equation}\label{eqPtA}
\mathscr A P_t f(X) = P_t \mathscr A  f(X).
\end{equation}
\end{itemize}
\end{lemma}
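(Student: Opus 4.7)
\textbf{Proof plan for Lemma \ref{L:invS}.} My plan is to establish (a) by exposing the convolution structure hidden in \eqref{pcomeG}, then to obtain (b) as a direct reading of Theorem \ref{T:hor}, and finally to derive the commutation (c) from the uniqueness clause of H\"ormander's Cauchy problem.

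For (a), the inclusion $\sA(\So)\subset\So$ is routine: $\operatorname{tr}(Q\nabla^2 u)$ is a linear combination of second derivatives of a Schwartz function, while $\langle BX,\nabla u\rangle$ is a polynomial of degree at most one times a Schwartz function, and both lie in $\So$. The substantive claim is $P_t(\So)\subset\So$. Starting from \eqref{pcomeG} and applying the linear change of variable $Z = X-e^{-tB}Y$ (so that $dY = e^{t\operatorname{tr}(B)}\,dZ$, which exactly cancels the prefactor $e^{-t\operatorname{tr}(B)}$), one rewrites
\begin{equation*}
P_t f(X) \;=\; (G^C_t * g_t)(X),\qquad g_t(W):= f(e^{tB}W),
\end{equation*}
where $G^C_t(Z):=(4\pi)^{-N/2}(\det C(t))^{-1/2}\exp\!\left(-\tfrac14 \langle C(t)^{-1}Z,Z\rangle\right)$ is a centered Gaussian. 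Under the standing assumption $C(t)>0$ (equivalent to $K(t)>0$), $G^C_t\in\So$; since $e^{tB}$ is a linear isomorphism of $\R^N$, we have $g_t\in\So$ whenever $f\in\So$; and $\So$ is closed under convolution. Hence $P_t f\in\So$, and the same representation makes differentiation of $P_t f$ in $X$ and $t$ under the integral sign fully legitimate.

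Given (a), part (b) is immediate from Theorem \ref{T:hor}: for $f\in\So$, $u(X,t):=P_t f(X)$ is by definition the unique solution of $\K u=0$ on $\R^{N+1}_+$ with datum $f$, whence $\partial_t P_t f = \sA P_t f$. For the commutation (c), set $v(X,t):=P_t(\sA f)(X)$ and $w(X,t):=\sA P_t f(X)$. By (a), $\sA f\in\So$, so Theorem \ref{T:hor} identifies $v$ with the unique solution of the Cauchy problem $\K v = 0$, $v(\cdot,0)=\sA f$. On the other hand, since $\sA$ has coefficients independent of $t$, using (b) we get $\partial_t w = \sA\partial_t P_t f = \sA^2 P_t f = \sA w$; the convolution representation of $P_t f$ shows $P_t f\to f$ and $\sA P_t f\to \sA f$ as $t\to 0^+$, so $w$ solves the same Cauchy problem as $v$. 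Uniqueness in Theorem \ref{T:hor} forces $v\equiv w$, which is \eqref{eqPtA}.

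The only nontrivial step is the Schwartz invariance $P_t(\So)\subset\So$ in (a); once the non-convolutional kernel \eqref{PtKt}/\eqref{pcomeG} is recognized, via the change of variable above, as a genuine Gaussian convolution (with the Jacobian absorbing the $e^{-t\operatorname{tr}(B)}$ factor regardless of the sign of $\operatorname{tr}(B)$), everything else is a formal consequence of H\"ormander's theorem and of the constant-coefficient commutation $\sA\partial_t = \partial_t\sA$.
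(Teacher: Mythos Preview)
Your argument is correct, but it proceeds quite differently from the paper's. For (a), the paper passes to the Fourier side and records the explicit formula
\[
\widehat{P_t f}(\xi)=e^{-t\operatorname{tr}B}\,e^{-4\pi^2\langle C(t)\xi,\xi\rangle}\,\hat f(e^{-tB^\star}\xi),
\]
from which $\widehat{P_t f}\in\So$ is immediate; you instead unfold the physical-space convolution $P_tf=G^C_t*g_t$ with $g_t=f\circ e^{tB}$, which is equally clean and has the advantage of making the differentiation-under-the-integral step in (b) transparent without any transform. For (b) the paper differentiates the Fourier formula, whereas you simply read off $\partial_tP_tf=\sA P_tf$ from Theorem \ref{T:hor}; both are one-line. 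The real divergence is in (c): the paper reduces $\widehat{\sA P_tf}=\widehat{P_t\sA f}$ to the matrix identity $e^{-tB}Qe^{-tB^\star}=Q-BC(t)-C(t)B^\star$ (checked by matching values at $t=0$ and $t$-derivatives), while you run a uniqueness argument for the Cauchy problem. Your route is more conceptual and avoids any computation; the paper's route is self-contained and yields a concrete identity that is reused later.

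One small caveat on your (c): you invoke the uniqueness clause of Theorem \ref{T:hor}, but as stated in the paper that theorem only asserts uniqueness for data in $\So$ without specifying the solution class. For your argument to close you need uniqueness in a class containing $w=\sA P_tf$ (e.g.\ bounded, or at most exponential growth), and you need $w(\cdot,t)\to\sA f$ as $t\to0^+$; the latter does follow from your convolution representation since $P_tf\to f$ in the Schwartz topology, but it is worth saying so explicitly. Neither point is a genuine obstacle.
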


\begin{proof}
 (a) The first part is obvious. For the second part it suffices to show that $\widehat{P_t f}\in \So$, and this follows from the following formula 
\begin{equation}\label{FTPt}
\widehat{P_t f}(\xi) = e^{-t \operatorname{tr} B}  e^{- 4 \pi^2 <C(t)\xi,\xi>} \hat{f}(e^{-tB^\star} \xi).
\end{equation}
(b) Easily follows from differentiating \eqref{FTPt} with respect to $t$, and using the following formula,
$$
\widehat{\sA f}(\xi) = - \left[<B^\star \xi, \nabla_\xi \hat f(\xi)> + \left(4 \pi^2 <Q\xi,\xi> + \operatorname{tr} B\right) \hat f(\xi)\right]$$
in combination with \eqref{FTPt}. (c) By (a), \eqref{eqPtA} is equivalent to showing that $\widehat{\sA P_t f} = \widehat{P_t \sA f}$ for $f\in \So$. 
After a routine computation, this is shown equivalent to the identity between the two symmetric quadratic forms
\[
 <e^{-tB} Q e^{-tB^\star} \xi,\xi>\ =\  <Q\xi,\xi> -  <B C(t) \xi,\xi> - <C(t) B^\star \xi,\xi>,\ \ \ \ \ \ \  \xi\in \RN,\ t>0.
\]
This is true as a consequence of the matrix identity
$$e^{-tB}Q e^{-tB^*}= Q  - B C(t) - C(t) B^*,\ \ \ \ \ \ \ \ t>0,$$
that can be verified by noting that both sides vanish at $t = 0$ and they have the same derivative in $t$ (see also \cite[equation (4.6)]{AT}).

\end{proof}

We observe the following simple fact. 

\begin{lemma}\label{L:Linfty}
One has: (1) $P_t : L^\infty_0 \to L^\infty_0$ for every $t>0$; (2) $\So$ is dense in $\Lii$.
\end{lemma}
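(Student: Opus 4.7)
For part (1), the plan is to exploit the explicit Gaussian representation \eqref{pcomeG} of $p(X,Y,t)$. First I would verify that $p(X,\cdot,t)$ is a probability density on $\RN$. This can be checked by the change of variables $Z=X-e^{-tB}Y$, whose Jacobian $e^{t\operatorname{tr} B}$ exactly compensates the factor $e^{-t\operatorname{tr} B}$ in \eqref{pcomeG}, reducing the computation to a Gaussian integral whose value is $(4\pi)^{N/2}(\det C(t))^{1/2}$. It will then be immediate that $\|P_t f\|_\infty \le \|f\|_\infty$ for $f\in\Lii$.

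Next I would establish continuity of $P_t f$ for $f\in\Lii$. Since $p(X,Y,t)$ is smooth in $X$, for $X_n\to X_0$ one has pointwise convergence $p(X_n,Y,t)f(Y)\to p(X_0,Y,t)f(Y)$, and a uniform Gaussian majorant (valid on any bounded neighborhood of $X_0$) allows the dominated convergence theorem to be applied. To show $P_t f$ vanishes at infinity, fix $\varepsilon>0$ and choose $R>0$ so that $|f(Y)|<\varepsilon$ for $|Y|>R$. Splitting the integral, the tail piece $\int_{|Y|>R} p(X,Y,t)|f(Y)|dY$ is bounded by $\varepsilon$ via the probability property, while for the compact piece $|Y|\le R$ one uses the Gaussian factor: since $e^{-tB}$ is nonsingular and $Y$ is bounded, $|X-e^{-tB}Y|\to\infty$ uniformly in $|Y|\le R$ as $|X|\to\infty$, so $p(X,Y,t)\to 0$ uniformly on $\{|Y|\le R\}$, and the integral vanishes in the limit. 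Letting $\varepsilon\to 0$ finishes the argument.

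For part (2), the plan is the standard two-step approximation in the sup norm on $\RN$. Given $f\in\Lii$, one first truncates by multiplying with a smooth cut-off $\chi_R$ equal to $1$ on $B(0,R)$ and supported in $B(0,2R)$; since $f$ vanishes at infinity, $\chi_R f\to f$ uniformly as $R\to\infty$. Then one mollifies $\chi_R f$ by convolution with a standard Friedrichs mollifier $\rho_\delta$, obtaining a function in $C^\infty_c(\RN)\subset\So$; uniform continuity of $\chi_R f$ (which holds because $\chi_R f\in C_0(\RN)$ with compact support) guarantees $\rho_\delta *(\chi_R f)\to \chi_R f$ uniformly as $\delta\to 0$. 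A diagonal argument produces the desired approximating sequence in $\So$.

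The only mildly subtle point is the vanishing-at-infinity property in part (1); all other steps are bookkeeping with Gaussian bounds. Note that no sign hypothesis on $\operatorname{tr} B$ enters, since the factor $e^{-t\operatorname{tr} B}$ is absorbed into the probability normalization.
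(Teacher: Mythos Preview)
Your proposal is correct. The paper does not actually supply a proof of this lemma: it introduces the statement with the phrase ``We observe the following simple fact'' and leaves it at that, so there is no argument to compare against. The approach you outline---verifying that $p(X,\cdot,t)$ is a probability density via \eqref{pcomeG}, obtaining the decay at infinity by splitting the integral and exploiting the positivity of $C(t)$ and the invertibility of $e^{-tB}$, and handling the density of $\So$ in $\Lii$ by truncation and mollification---is exactly the standard route one would take, and all steps are sound.
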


We next collect some known results concerning the action of $\{P_t\}_{t>0}$ on the spaces $\Lp$, see \cite{Me} and \cite{LB}.

\begin{lemma}\label{L:Pt}
The following properties hold:
\begin{itemize}
\item[(i)] For every $X\in \RN$ and $t>0$ we have
$P_t 1(X) = \int_{\RN} p(X,Y,t) dY = 1$;
\item[(ii)] $P_t:L^\infty \to L^\infty$ with $||P_t||_{L^\infty\to L^\infty} \le 1$;
\item[(iii)] For every $Y\in \RN$ and $t>0$ one has
$
\int_{\RN} p(X,Y,t) dX = e^{- t \operatorname{tr} B}.
$
\item[(iv)] Let $1\le p<\infty$, then $P_t:L^p \to L^p$ with $||P_t||_{L^p\to L^p} \le e^{-\frac{t \operatorname{tr} B}p}$. If $\operatorname{tr} B\ge 0$, $P_t$ is a contraction on $L^p$ for every $t>0$;
\item[(v)] [Chapman-Kolmogorov equation]
for every $X, Y\in \R^N$ and $t>0$ one has
$$
p(X,Y,s+t)  = \int_{\R^N} p(X,Z,s) p(Z,Y,t) dZ.
$$
Equivalently, one has $P_{t+s} = P_t \circ P_s$ for every $s, t>0$.
\end{itemize}
\end{lemma}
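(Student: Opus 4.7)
The plan is to read off the Gaussian normalizations from the two equivalent forms \eqref{PtKt} and \eqref{pcomeG} of the kernel, and to pass to the Fourier side via \eqref{FTPt} of Lemma \ref{L:invS} for the semigroup identity. For (i), I would note that \eqref{PtKt} displays $Y\mapsto p(X,Y,t)$ as the Gaussian density on $\RN$ with mean $e^{tB}X$ and covariance $2tK(t)$, so it has total mass $1$. Claim (ii) is then immediate from $p\ge 0$: $|P_tf(X)|\le \|f\|_\infty \int_{\RN} p(X,Y,t)\,dY = \|f\|_\infty$. For (iii), I would use the dual form \eqref{pcomeG}, which displays $X\mapsto p(X,Y,t)$ as $e^{-t\operatorname{tr} B}$ times the Gaussian density with mean $e^{-tB}Y$ and covariance $2C(t)$; integration in $X$ then produces the stated factor $e^{-t\operatorname{tr} B}$.

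To deduce (iv), I would first combine Tonelli's theorem with (iii) to get
\begin{equation*}
\|P_t f\|_1 \le \int_{\RN}\!\!\int_{\RN} p(X,Y,t)\,|f(Y)|\,dY\,dX = e^{-t\operatorname{tr} B}\|f\|_1,
\end{equation*}
so that $\|P_t\|_{L^1\to L^1}\le e^{-t\operatorname{tr} B}$, and then interpolate by Riesz-Thorin between this $L^1$ estimate and the $L^\infty$ bound obtained in (ii). The resulting $\|P_t\|_{L^p\to L^p}\le e^{-t\operatorname{tr} B/p}$ is a contraction precisely when $\operatorname{tr} B\ge 0$.

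For (v), applying \eqref{FTPt} twice gives
\begin{equation*}
\widehat{P_t(P_s f)}(\xi) = e^{-(t+s)\operatorname{tr} B}\,\exp\bigl(-4\pi^2\langle C(t)\xi,\xi\rangle - 4\pi^2\langle C(s)e^{-tB^\star}\xi, e^{-tB^\star}\xi\rangle\bigr)\,\hat f(e^{-(t+s)B^\star}\xi),
\end{equation*}
and matching with $\widehat{P_{t+s}f}$ reduces the operator identity $P_t\circ P_s=P_{t+s}$ to the matrix relation
\begin{equation*}
C(t+s)=C(t)+e^{-tB}\,C(s)\,e^{-tB^\star},
\end{equation*}
which I would verify by splitting $\int_0^{t+s}$ as $\int_0^t+\int_t^{t+s}$ in the definition of $C(t+s)$ and performing the change of variable $\sigma = t+\tau$ in the second integral. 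Once $P_{t+s}=P_t\circ P_s$ is known on $\So$, expanding both sides as integrals against $p$ and applying Fubini delivers the pointwise Chapman--Kolmogorov identity.

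The whole argument is largely bookkeeping rather than a genuine obstacle; the only care needed is to handle the Gaussian normalization constants correctly between the two equivalent forms \eqref{PtKt} and \eqref{pcomeG}, and to order the items so that (iii) is proved before (iv), since the $L^1$ estimate in (iv) depends on the total-mass computation of (iii).
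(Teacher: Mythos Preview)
Your argument is correct in all five parts. The Gaussian normalizations in (i) and (iii) are read off correctly from \eqref{PtKt} and \eqref{pcomeG} respectively; (ii) and (iv) follow as you say from positivity, Tonelli, and Riesz--Thorin interpolation; and the Fourier-side verification of (v) via \eqref{FTPt} and the additivity identity $C(t+s)=C(t)+e^{-tB}C(s)e^{-tB^\star}$ is clean and correct.

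The only comparison to make is that the paper does not actually supply a proof of this lemma: it simply records the statements as ``known results concerning the action of $\{P_t\}_{t>0}$ on the spaces $L^p$'' and refers the reader to \cite{Me} and \cite{LB}. So your write-up is not merely a different route, it is a self-contained proof where the paper defers to the literature. Your choice to argue (v) on the Fourier side is natural given that \eqref{FTPt} has already been established in Lemma \ref{L:invS}; an alternative would be a direct Gaussian convolution computation in the $X$-variable using \eqref{pcomeG}, but your approach is shorter. One very small point: to pass from the operator identity $P_{t+s}=P_t\circ P_s$ on $\So$ to the pointwise kernel identity you should remark that both $Y\mapsto p(X,Y,s+t)$ and $Y\mapsto\int_{\R^N}p(X,Z,s)p(Z,Y,t)\,dZ$ are continuous (indeed smooth) functions, so equality of their integrals against every $f\in\So$ forces pointwise equality.
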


We note that it was shown in \cite{L} that $\{P_t\}_{t>0}$ is not a strongly continuous semigroup in the space of uniformly continuous bounded functions in $\RN$, but this fact will have no bearing on our results since we are primarily concerned with the action of the H\"ormander semigroup on $L^p$, when $1\le p<\infty$, and on the replacement space $\Lii$ when $p = \infty$. In this respect, we begin with a simple but quite useful lemma. 
 
\begin{lemma}\label{L:Lprate}
Let $1\le p \le \infty$. Given any $f\in \So$ for any $t\in [0,1]$ we have
\[
||P_t f - f||_{p} \le ||\mathscr A f||_{p}\ \omega(t),
\]
where $\omega(t)\le \max\{1,e^{-\frac{\operatorname{tr} B}p}\}\ t$.
\end{lemma}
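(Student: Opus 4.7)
The plan is to use the fundamental theorem of calculus together with the commutation identity from Lemma \ref{L:invS} and the $L^p$-bounds for $\{P_t\}_{t>0}$ collected in Lemma \ref{L:Pt}. Since $f\in \So$ and hence $P_sf\in\So$ for every $s\ge 0$ by part (a) of Lemma \ref{L:invS}, part (b) of the same lemma tells us that $s\mapsto P_sf(X)$ is $C^1$ with derivative $\mathscr A P_s f(X)$, which by the commutation property (c) equals $P_s \mathscr A f(X)$. Integrating from $0$ to $t$ I would write
\[
P_t f(X) - f(X) \;=\; \int_0^t P_s \mathscr A f(X)\, ds,\qquad X\in \RN.
\]

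Next I would pass to $L^p$ norms. Since $\mathscr A f \in \So$ by Lemma \ref{L:invS}(a), it belongs to $L^p$ for every $1\le p\le\infty$, and Minkowski's integral inequality yields
\[
\|P_t f - f\|_p \;\le\; \int_0^t \|P_s \mathscr A f\|_p\, ds \;\le\; \|\mathscr A f\|_p \int_0^t \|P_s\|_{p\to p}\, ds.
\]
At this point the bounds from Lemma \ref{L:Pt} take over: for $p=\infty$ part (ii) gives $\|P_s\|_{\infty\to\infty}\le 1$, while for $1\le p<\infty$ part (iv) gives $\|P_s\|_{p\to p}\le e^{-s\operatorname{tr} B/p}$. (The case $p=\infty$ is consistent with the convention $a/\infty=0$ adopted in the Notation subsection.)

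Finally I would bound the integral $\int_0^t \|P_s\|_{p\to p}\,ds$ according to the sign of $\operatorname{tr} B$. If $\operatorname{tr} B\ge 0$ the integrand is at most $1$, so the integral is bounded by $t$. If $\operatorname{tr} B<0$, the function $s\mapsto e^{-s\operatorname{tr} B/p}$ is nondecreasing, so on the interval $s\in[0,t]\subset[0,1]$ it is bounded above by its value at $s=1$, namely $e^{-\operatorname{tr} B/p}$, giving $\int_0^t e^{-s\operatorname{tr} B/p}\,ds \le e^{-\operatorname{tr} B/p}\,t$. Combining both cases produces the desired estimate with $\omega(t)\le \max\{1,e^{-\operatorname{tr} B/p}\}\,t$. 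There is no real obstacle here; the only point requiring care is the restriction $t\in[0,1]$, which is exactly what allows the crude uniform bound $e^{-s\operatorname{tr} B/p}\le e^{-\operatorname{tr} B/p}$ to pull out of the integral independently of $t$.
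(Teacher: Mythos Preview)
Your proof is correct and follows essentially the same route as the paper: write $P_t f - f = \int_0^t P_s \mathscr A f\, ds$ via Lemma~\ref{L:invS}(b),(c), apply Minkowski's integral inequality, and bound $\|P_s\|_{p\to p}$ using Lemma~\ref{L:Pt}(ii),(iv). The only difference is cosmetic: the paper defines $\omega(t) = \int_0^t e^{-\tau\operatorname{tr} B/p}\,d\tau$ directly and leaves the bound $\omega(t)\le \max\{1,e^{-\operatorname{tr} B/p}\}\,t$ implicit, whereas you spell out the case split on the sign of $\operatorname{tr} B$.
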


\begin{proof}
By Lemma \ref{L:invS}, part (b) and the commutation identity \eqref{eqPtA}, we have 
for any $f\in \So$, 
\begin{align*}
P_t f(X) - f(X) & = \int_0^t \frac{d}{d\tau} P_\tau f(X) d\tau = \int_0^t \mathscr A P_\tau f(X) d\tau = \int_0^t P_\tau  \mathscr A f(X) d\tau.
\end{align*}
This gives for any $0\le t \le 1$,
\[
||P_t f - f||_{p} \le \int_0^t ||P_\tau  \mathscr A f||_p d\tau \le ||\mathscr A f||_{p} \int_0^t  e^{- \tau \frac{\operatorname{tr} B}p} d\tau = ||\mathscr A f||_{p}\  \omega(t),
\]
where in the second inequality we have used (ii) and (iv) of Lemma \ref{L:Pt}.

\end{proof}

\begin{corollary}\label{C:Ptpzero}
Let $1\le p< \infty$. For every $f\in L^p$, we have
$||P_tf-f||_{p}\rightarrow 0$ as $t \to 0^+.$ Consequently, $\{P_t\}_{t>0}$ is a strongly continuous semigroup on $\Lp$. The same is true when $p = \infty$, if we replace $L^\infty$ by the space $\Lii$.  
\end{corollary}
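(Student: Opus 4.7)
The plan is a standard density argument: Lemma \ref{L:Lprate} already gives the rate of convergence on the dense subspace $\So$, so the only thing to do is transfer this to all of $L^p$ (resp.\ $\Lii$) using the uniform boundedness of $\{P_t\}$ near $t=0$, and then bootstrap the continuity at $0$ to continuity at every $t>0$ via the semigroup law.

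First I would fix $1\le p<\infty$ and recall that $\So$ is dense in $L^p$. From Lemma \ref{L:Pt}(iv), the operator norm $\|P_t\|_{p\to p}\le e^{-t\tr B/p}$ depends continuously on $t$, so there exists a constant $C$ such that $\|P_t\|_{p\to p}\le C$ for all $t\in[0,1]$. Given $f\in L^p$ and $\e>0$, choose $g\in\So$ with $\|f-g\|_p<\e$. Then for $t\in(0,1]$ I would write
\[
\|P_tf-f\|_p\ \le\ \|P_t(f-g)\|_p+\|P_tg-g\|_p+\|g-f\|_p\ \le\ (C+1)\e+\|P_tg-g\|_p.
\]
By Lemma \ref{L:Lprate} applied to $g\in\So$, the last term is bounded by $\|\sA g\|_p\,\omega(t)\to0$ as $t\to0^+$. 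Taking $\limsup_{t\to0^+}$ and then letting $\e\to0$ yields $\|P_tf-f\|_p\to0$.

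Next I would upgrade this to strong continuity at every $t_0>0$. Using the Chapman--Kolmogorov identity $P_{t_0+h}=P_{t_0}\circ P_h$ from Lemma \ref{L:Pt}(v) together with the bound $\|P_{t_0}\|_{p\to p}\le C(t_0)$, for $h>0$
\[
\|P_{t_0+h}f-P_{t_0}f\|_p=\|P_{t_0}(P_hf-f)\|_p\le C(t_0)\,\|P_hf-f\|_p\xrightarrow[h\to0^+]{}0,
\]
while for left-continuity, since $P_{t_0}f=P_{t_0-h}(P_hf)$ and $P_{t_0-h}$ stays uniformly bounded for small $h$,
\[
\|P_{t_0}f-P_{t_0-h}f\|_p=\|P_{t_0-h}(P_hf-f)\|_p\le C'\|P_hf-f\|_p\xrightarrow[h\to0^+]{}0.
\]
This establishes strong continuity of $\{P_t\}_{t>0}$ on $L^p$.

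For the $p=\infty$ case, the same scheme works verbatim after replacing $L^\infty$ by $\Lii$: Lemma \ref{L:Linfty} provides both the invariance $P_t(\Lii)\subset\Lii$ and the density of $\So$ in $\Lii$, while Lemma \ref{L:Pt}(ii) gives the uniform bound $\|P_t\|_{\infty\to\infty}\le 1$, and Lemma \ref{L:Lprate} with $p=\infty$ furnishes the rate on $\So$. There is no substantial obstacle here; the only subtlety worth noting is that one cannot state the result for $L^\infty$ itself because $\{P_t\}$ fails to be strongly continuous there (as mentioned in the text after Lemma \ref{L:Pt}), which is exactly why one must pass to $\Lii$ and why Lemma \ref{L:Linfty} was recorded.
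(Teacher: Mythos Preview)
Your proof is correct and follows essentially the same approach as the paper: both use the density of $\So$ together with Lemma~\ref{L:Lprate} (and Lemma~\ref{L:Linfty} for the $\Lii$ case) to get strong continuity at $0$, and then deduce strong continuity everywhere. You are simply more explicit than the paper, which compresses the density argument into one sentence and, for the second part, cites \cite[Proposition~1.3]{EN} rather than writing out the semigroup-law computation you give.
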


\begin{proof}
The first part of the statement follows immediately from the density of $\So$ in $L^p$ and from Lemmas \ref{L:Lprate} and \ref{L:Linfty}). 
The second part is a standard consequence of the former, see e.g. \cite[Proposition 1.3]{EN}.

\end{proof}

\begin{remark}\label{R:infty}
The reader should keep in mind that from this point on when we consider $\{P_t\}_{t>0}$ as a strongly continuous semigroup in $\Lp$, when $p = \infty$ we always mean that $\Lii$ must be used instead of $\Li$.
\end{remark}

Denote by $(\sA_p,D_p)$ the infinitesimal generator of the semigroup $\{P_t\}_{t>0}$ on $L^p$ with domain  
\[
D_p = \left\{f\in L^p\mid \sA_p f \overset{def}{=} \underset{t\to 0^+}{\lim}\ \frac{P_t f - f}{t}\ \text{exists in }\ L^p\right\}.
\]
One knows that $(\sA_p,D_p)$ is closed and densely defined (see \cite[Theorem 1.4]{EN}). 

\begin{corollary}\label{C:lp}
We have $\So\subset D_p$. Furthermore, $\sA_p f = \sA f$ for any $f\in \So$, and $\So$ is a core for $(\sA_p,D_p)$.
\end{corollary}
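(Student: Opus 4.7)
The plan is to handle the three claims in sequence, with the first two inclusions essentially packaged inside the integral identity used in the proof of Lemma \ref{L:Lprate}, and the core property reduced to a standard semigroup-theoretic criterion.

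First I would prove that $\So \subset D_p$ and that $\sA_p f = \sA f$ for every $f \in \So$. The key observation is the identity
\[
P_t f(X) - f(X) = \int_0^t P_\tau \sA f(X)\, d\tau, \qquad f\in\So,
\]
which was already established in the proof of Lemma \ref{L:Lprate} using the commutation \eqref{eqPtA} and part (b) of Lemma \ref{L:invS}. Since $\sA(\So)\subset\So$ by Lemma \ref{L:invS}(a), and $\So$ embeds continuously into $L^p$ for $1\le p<\infty$ and into $\Lii$ (recall Remark \ref{R:infty}) for $p=\infty$, the function $\sA f$ lies in the Banach space in which we are working. Writing
\[
\left\|\frac{P_t f - f}{t} - \sA f\right\|_p \;\le\; \frac{1}{t}\int_0^t \|P_\tau\sA f - \sA f\|_p\, d\tau,
\]
the right-hand side tends to $0$ as $t \to 0^+$ thanks to the strong continuity of $\{P_t\}_{t>0}$ on $L^p$ (or $\Lii$) established in Corollary \ref{C:Ptpzero}. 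This proves simultaneously that $f\in D_p$ and that $\sA_p f = \sA f$.

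For the core statement I would appeal to the following standard criterion from semigroup theory (see, e.g., \cite[Prop. II.1.7 and its corollaries]{EN}): if $D \subset D_p$ is dense in the ambient Banach space and invariant under the semigroup, i.e.\ $P_t(D)\subset D$ for every $t>0$, then $D$ is a core for the generator. All three hypotheses are at hand for $D=\So$: the inclusion $\So\subset D_p$ was just verified; the invariance $P_t(\So)\subset\So$ is exactly Lemma \ref{L:invS}(a); and the density of $\So$ in $L^p$ for $1\le p<\infty$ is classical, while density in $\Lii$ is Lemma \ref{L:Linfty}(2). Hence $\So$ is a core for $(\sA_p, D_p)$.

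I do not anticipate any serious obstacle: the content is really an application of two standard facts (the fundamental theorem of calculus along the semigroup trajectories, and the invariance-plus-density criterion for cores), once the groundwork laid in Lemmas \ref{L:invS}--\ref{L:Lprate} and Corollary \ref{C:Ptpzero} is in place. The only small point requiring attention is the $p=\infty$ case, where one must be careful to replace $L^\infty$ by $\Lii$ both as the ambient space and in the definition of $D_\infty$, consistently with Remark \ref{R:infty}; the density of $\So$ in $\Lii$ is precisely what makes the argument go through there.
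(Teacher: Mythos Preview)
Your proposal is correct and follows essentially the same route as the paper: both establish $\So\subset D_p$ and $\sA_p f=\sA f$ via the integral identity $\frac{P_t f-f}{t}-\sA f=\frac1t\int_0^t(P_\tau\sA f-\sA f)\,d\tau$, and both deduce the core property from the density of $\So$ plus the invariance $P_t(\So)\subset\So$ via \cite[Proposition 1.7]{EN}. The only cosmetic difference is that the paper bounds the integrand using the quantitative rate of Lemma \ref{L:Lprate} (applied to $\sA f\in\So$, yielding $\le C\|\sA^2 f\|_p\,t$), whereas you invoke strong continuity from Corollary \ref{C:Ptpzero}; both are valid and amount to the same argument.
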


\begin{proof}

For any $f\in \So$ we obtain from \eqref{eqPtA}: $\frac{P_t f - f}{t} - \sA f = \frac 1t \int_0^t \left[P_s \sA f - \sA f\right] ds$.
An application of Minkowski's integral inequality and Lemma \ref{L:Lprate} (keeping in mind that $\sA f\in \So$ as well) give
\[
\left\|\frac{P_t f - f}{t} - \sA f\right\|_{p} \le \frac 1t \int_0^t ||P_s \sA f - \sA f||_{p} ds \le C ||\sA^2 f||_{p} \ t.
\]
This shows  that $\So \subset D_p$, and moreover the two linear operators $\sA_p$  and $\sA$ coincide on the dense subspace $\So$. Finally, the fact that $\So$ is a core for $(\sA_p,D_p)$ follows from the second part of (a) in Lemma \ref{L:invS} and the fact that $\So$ is dense in $\Lp$, see \cite[Proposition 1.7]{EN}.

\end{proof}

\begin{remark}\label{R:id}
From now on for a given $p\in [1,\infty]$ with a slight abuse of notation we write $\sA : D_p\to \Lp$ instead of $\sA_p$. In so doing, we must keep in mind that $\sA$ actually indicates the closed operator $\sA_p$ that, thanks to Corollary \ref{C:lp}, coincides with the differential operator $\sA$ on $\So$. Using this identification we will henceforth say that $(\sA,D_p)$ is the infinitesimal generator of the semigroup $\{P_t\}_{t>0}$ on $\Lp$.
\end{remark}
Up to now we have not made use of the assumption \eqref{trace}. In the next lemma we change course.

\begin{lemma}\label{L:specter}
Assume that \eqref{trace} be in force, and let $1\le p \le \infty$. Then: 
\begin{itemize}
\item[(1)] For any $\la\in \mathbb C$ such that $\Re \la >0$, we have $\la\in \rpp$;
\item[(2)] If $\la\in \mathbb C$ such that $\Re \la >0$, then $R(\la,\sA)$ exists and for any $f\in \Lp$ it is given by the formula $R(\la,\sA) f = \int_0^\infty e^{-\la t} P_t f\ dt$;
\item[(3)] For any $\Re \la > 0$ we have
$||R(\la,\sA)||_{p\to p} \le \frac{1}{\Re \la}$.
\end{itemize}
\end{lemma}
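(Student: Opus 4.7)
The plan is to prove all three statements simultaneously via the standard argument that, for a $C_0$-semigroup of contractions, the resolvent is given by the Laplace transform of the semigroup, with the bound $\|R(\lambda,\sA)\|\le 1/\Re \lambda$ on the right half plane. The key inputs are already in place: by Lemma \ref{L:Pt}(iv) combined with \eqref{trace}, $\{P_t\}_{t>0}$ is a contraction semigroup on $\Lp$ for $1\le p<\infty$, and by Lemma \ref{L:Pt}(ii) it is also contractive on $\Li$; together with Corollary \ref{C:Ptpzero} and Remark \ref{R:infty}, it is strongly continuous on $\Lp$ for $p<\infty$ and on $\Lii$ for $p=\infty$.

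First, for $\Re \lambda>0$ and $f\in\Lp$, I would define the candidate resolvent
\[
R_\lambda f \;\overset{def}{=}\; \int_0^\infty e^{-\lambda t}\, P_t f\, dt,
\]
where the integral is understood as a Bochner integral in $\Lp$ (or in $\Lii$ when $p=\infty$). Strong continuity guarantees $t\mapsto e^{-\lambda t}P_t f$ is Bochner measurable, while contractivity gives $\|e^{-\lambda t}P_t f\|_p\le e^{-(\Re\lambda) t}\|f\|_p$, so the integral converges absolutely and
\[
\|R_\lambda f\|_p \;\le\; \int_0^\infty e^{-(\Re\lambda)t}\,dt\cdot \|f\|_p\;=\;\frac{\|f\|_p}{\Re \lambda}.
\]
This immediately yields the norm bound in (3) once $R_\lambda$ is identified with $R(\lambda,\sA)$.

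Next I would verify that $R_\lambda$ is a right inverse of $\lambda I-\sA$. For $h>0$, using the semigroup property of Lemma \ref{L:Pt}(v) and a change of variable $s=t+h$,
\[
\frac{P_h-I}{h}\,R_\lambda f \;=\; \frac{e^{\lambda h}-1}{h}\int_h^\infty e^{-\lambda s}P_s f\,ds \;-\; \frac{1}{h}\int_0^h e^{-\lambda t}P_t f\,dt.
\]
By strong continuity the second term converges to $f$ in $\Lp$ as $h\to 0^+$, and the first converges to $\lambda R_\lambda f$. Hence $R_\lambda f\in D_p$ and $\sA R_\lambda f=\lambda R_\lambda f-f$, so $(\lambda I-\sA)R_\lambda f=f$. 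For $f\in D_p$, the same Lemma yields $P_t \sA f=\sA P_t f$ (this being well-defined because $\sA$ is closed and $P_t$ commutes with the resolvent on the dense core $\So$ by Lemma \ref{L:invS}(c) and Corollary \ref{C:lp}); integrating against $e^{-\lambda t}\,dt$ and using closedness of $\sA$ gives $R_\lambda \sA f=\sA R_\lambda f$, hence $R_\lambda(\lambda I-\sA)f=f$. Therefore $\lambda I-\sA$ is bijective with bounded inverse $R_\lambda$, which simultaneously proves (1) and identifies $R_\lambda=R(\lambda,\sA)$ in (2), and finally (3) from the estimate above.

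There is no real obstacle here; this is the classical Hille-Yosida computation, and every hypothesis needed (contractivity on $\Lp$, strong continuity, $\So$ being a core, commutation of $P_t$ with $\sA$) has been established in Lemmas \ref{L:invS}-\ref{L:Pt}, Corollary \ref{C:Ptpzero} and Corollary \ref{C:lp}. The one bookkeeping subtlety is the case $p=\infty$: one must consistently work on the invariant subspace $\Lii$ where the semigroup is strongly continuous (per Remark \ref{R:infty}), so that the Bochner integral defining $R_\lambda$ and the difference quotient limit make sense; the same estimates then give $\|R_\lambda\|_{\infty\to\infty}\le 1/\Re\lambda$ on $\Lii$.
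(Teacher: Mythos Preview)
Your proof is correct and follows exactly the approach the paper intends: the paper omits the proof entirely, noting it is a direct consequence of (ii), (iv) in Lemma \ref{L:Pt} and of \cite[Theorem 1.10]{EN}, and what you have written is precisely the standard Hille--Yosida/Laplace-transform argument that underlies that cited theorem. One minor remark: the commutation $P_t\sA f=\sA P_t f$ for $f\in D_p$ is a general fact for $C_0$-semigroups and their generators, so you need not route it through Lemma \ref{L:invS}(c) and the core $\So$.
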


We omit the proof of Lemma \ref{L:specter} since it is a direct consequence of  (ii), (iv) in Lemma \ref{L:Pt}, and of \cite[Theorem 1.10]{EN}.




In semigroup theory a procedure for forming a new semigroup from a given one is that of evolution semigroup, see \cite{CL}. In what follows we exploit  this idea to introduce a new semigroup that will be used as a building block for: (1) defining the fractional powers of the operator $\K$ in \eqref{K0} above; (2) solve the extension problem for such nonlocal operators.
Henceforth, we use the notation $\RNu$ to indicate the space $\RN \times \R$ with respect the variables $(X,t)$. 

\begin{definition}\label{D:eks}
With $p(X,Y,\tau)$ as in \eqref{PtKt}, we define the \emph{evolutive H\"ormander semigroup} on a function $u\in \mathscr S(\RNu)$ as
\begin{equation}\label{eks}
P^\K_\tau u(X,t) \overset{def}{=} \int_{\R^N} p(X,Y,\tau) u(Y,t-\tau) dY,\qquad (X,t)\in\RNu,\,\,\tau>0.
\end{equation}
\end{definition}
We observe that if we let $\Lambda_h u(X,t) = u(X,t+h)$, then \eqref{eks} can be also written as $P^\K_\tau u = P_\tau(\Lambda_{-\tau} u)$.

\begin{lemma}\label{L:eks}
If for $u\in \S$ we define $v(X,t;\tau) = P^\K_\tau u(X,t)$,
then $v\in C^\infty(\RNu\times (0,\infty))$
and it solves the Cauchy problem
$$\begin{cases}
\de_\tau v = \K v \ \ \ \ \ \ \ \ \ \ \ \ \ \ \ \  \  \text{in}\ \RNu\times (0,\infty),
\\
v(X,t;0) = u(X,t)\ \ \ \ \ \ \ (X,t)\in \RNu.
\end{cases}$$
\end{lemma}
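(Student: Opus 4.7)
\medskip

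My plan is a differentiation-under-the-integral-sign argument, exploiting that the Gaussian kernel \eqref{PtKt} satisfies the Kolmogorov equation $\partial_\tau p(X,Y,\tau) = \mathscr A_X p(X,Y,\tau)$. First I would justify the smoothness $v\in C^\infty(\RNu\times(0,\infty))$. For fixed $\tau>0$, the kernel $p(\cdot,\cdot,\tau)$ and all its $(X,Y,\tau)$-derivatives are Schwartz in $Y$ with Gaussian decay depending continuously on the parameter, while $u\in\S$ is Schwartz in $(Y,t-\tau)$. Hence for any multi-index in $(X,t,\tau)$ the integrand and its derivative are dominated by integrable functions locally uniformly in the parameter, so repeated differentiation under the integral sign is legitimate.

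Next I would verify the PDE. Writing $v(X,t;\tau) = \int p(X,Y,\tau)\,u(Y,t-\tau)\,dY$, the product rule yields
\begin{equation*}
\partial_\tau v(X,t;\tau) = \int \partial_\tau p(X,Y,\tau)\,u(Y,t-\tau)\,dY - \int p(X,Y,\tau)\,\partial_t u(Y,t-\tau)\,dY.
\end{equation*}
For the first integrand I use Lemma~\ref{L:invS}(b) in the form $\partial_\tau p(X,Y,\tau)=\mathscr A_X p(X,Y,\tau)$; since the $X$-derivatives can be pulled outside the integral thanks to the Gaussian decay, the first term equals $\mathscr A v(X,t;\tau)$. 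For the second integrand, the identity $\partial_\tau[u(Y,t-\tau)] = -\partial_t u(Y,t-\tau)$ combined with the same differentiation under the integral shows that this term equals $\partial_t v(X,t;\tau)$. Combining, $\partial_\tau v = \mathscr A v - \partial_t v = \K v$.

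Finally I would treat the initial condition $v(X,t;\tau)\to u(X,t)$ as $\tau\to 0^+$. Using the factorization $P^\K_\tau u = P_\tau(\Lambda_{-\tau}u)$ noted right after Definition~\ref{D:eks}, I split
\begin{equation*}
v(X,t;\tau) - u(X,t) = P_\tau\bigl[u(\cdot,t-\tau) - u(\cdot,t)\bigr](X) + \bigl[P_\tau u(\cdot,t)(X) - u(X,t)\bigr].
\end{equation*}
The first term is controlled using $\|P_\tau\|_{L^\infty\to L^\infty}\le 1$ from Lemma~\ref{L:Pt}(ii) together with the uniform continuity in $t$ of the Schwartz function $u$, giving a bound that tends to zero with $\tau$. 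The second term tends to zero, either pointwise by Lemma~\ref{L:Lprate} applied to $u(\cdot,t)\in\So$ (so that $\|P_\tau u(\cdot,t)-u(\cdot,t)\|_\infty\to 0$), or by Corollary~\ref{C:Ptpzero} combined with the density argument already available.

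The main obstacle I anticipate is the careful bookkeeping for the differentiation under the integral, specifically producing a $\tau$-locally uniform integrable majorant that accommodates all spatial, temporal, and parameter derivatives at once. Once one explicitly uses the Gaussian factor in \eqref{PtKt} and the rapid decay of $u$ and its derivatives, this reduces to a routine exercise in dominated convergence, so no deeper difficulty should arise.
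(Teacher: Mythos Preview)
Your proposal is correct and follows essentially the same route as the paper: differentiate under the integral sign using the Gaussian bounds on $p$ and its derivatives, invoke the kernel equation $\partial_\tau p=\mathscr A_X p$ to obtain $\partial_\tau v=\mathscr A v-\partial_t v=\K v$, and recover the initial datum from the small-time behaviour of $P_\tau$. The only cosmetic differences are that the paper packages the PDE step as $\partial_\tau v+\partial_t v=\mathscr A v$ in one line, and handles the initial condition with a terse appeal to ``the properties of $P_\tau$'' rather than your explicit splitting.
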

\begin{proof}
First of all, from the properties of $P_\tau$, it is easy to verify that $v(X,t;\tau)$ tends to $u(X,t)$ as $\tau\rightarrow 0^+$. Moreover, the assumption that $u\in \S$ implies that it has bounded time-derivatives of any order. This fact, together with the Gaussian behavior of the kernel $p(X,Y,\tau)$ (and of its derivatives), allows to differentiate under the integral sign for $\tau>0$: for more details, the reader can find in \eqref{Xderp} an explicit computation of the first derivatives of $p$. In particular, $v$ is $C^\infty(\RNu\times (0,\infty))$. Finally $\de_\tau v = \K v$ since, for positive $\tau$, we have
$$\de_\tau v + \de_t v =\int_{\R^N} \de_\tau p(X,Y,\tau) u(Y,t-\tau) dY = \int_{\R^N} \mathscr A p(X,Y,\tau) u(Y,t-\tau) dY=\mathscr A v.$$
\end{proof}

We will need the counterpart of Lemmas \ref{L:invS} and \ref{L:Pt} for the semigroup $\{P^\K_\tau\}_{\tau>0}$.

\begin{lemma}\label{L:invSK}
For any $t>0$ we have: 
\begin{itemize}
\item[(a)] $\K(\S)\subset \S$ and $P^\K_\tau(\S) \subset \S$;
\item[(b)] For any $u\in \S$ and $(X,t)\in \RNu$ one has $\frac{\p}{\p_\tau} P^\K_\tau u(X,t) = \K P^\K_\tau u(X,t)$; 
\item[(c)] For every $u\in \S$ and $(X,t)\in \RNu$ the commutation property is true
$$
\K P^\K_\tau u(X,t) = P^\K_\tau \K  u(X,t).
$$
\end{itemize}
\end{lemma}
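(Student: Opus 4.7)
My plan is to reduce everything to the already-established properties of the semigroup $\{P_t\}_{t>0}$ and the operator $\sA$ (Lemmas \ref{L:invS} and \ref{L:Pt}), exploiting the decomposition $\K = \sA - \de_t$ together with the identity
$$P^\K_\tau u(X,t) = P_\tau\!\left(u(\cdot, t-\tau)\right)(X),$$
where $P_\tau$ acts on the $X$-variable only. The key organizing observation is that $\de_t$ tautologically commutes both with $P_\tau$ (which ignores the time variable) and with $\sA$ (which is a differential operator in $X$ alone), while $\sA$ commutes with $P_\tau$ by Lemma \ref{L:invS}(c).

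For part (a), that $\K(\S)\subset \S$ is immediate from the decomposition $\K=\sA-\de_t$: differentiation in $t$ preserves $\S$, and $\sA = \tr(Q\nabla^2) + \langle BX,\nabla\rangle$ is a second-order differential operator in $X$ whose coefficients grow at most linearly, so it maps $\S$ into itself. To show $P^\K_\tau(\S)\subset \S$, I would take the partial Fourier transform in the $X$-variable and invoke formula \eqref{FTPt} to write
$$\widehat{P^\K_\tau u}(\xi,t) = e^{-\tau\tr B}\,e^{-4\pi^2\langle C(\tau)\xi,\xi\rangle}\,\hat u\!\left(e^{-\tau B^\star}\xi,\,t-\tau\right).$$
Each of the three factors preserves the joint Schwartz class in $(\xi,t)$: the first is a constant, the second is a smooth bounded function of $\xi$ all of whose derivatives are polynomially bounded, and the third is the composition of $\hat u\in\S$ with an invertible linear change in $\xi$ and a translation in $t$. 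Inverting the partial Fourier transform yields $P^\K_\tau u\in\S$.

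Part (b) is essentially the content of Lemma \ref{L:eks}, which has already been established; I would simply note this.

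Part (c) follows by two separate commutations. Applying $\sA$ in $X$ to the identity $P^\K_\tau u(X,t) = P_\tau(u(\cdot,t-\tau))(X)$ and invoking Lemma \ref{L:invS}(c) (for each fixed $t$, the function $Y\mapsto u(Y,t-\tau)$ lies in $\So$), one gets
$$\sA\, P^\K_\tau u(X,t) \;=\; P_\tau\!\left(\sA u(\cdot,t-\tau)\right)(X) \;=\; P^\K_\tau(\sA u)(X,t).$$
On the other hand, differentiation under the integral sign (justified by the Gaussian bounds on $p(X,Y,\tau)$ and on its derivatives together with $u\in\S$, as in the proof of Lemma \ref{L:eks}) gives $\de_t P^\K_\tau u = P^\K_\tau(\de_t u)$. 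Subtracting these two identities produces $\K P^\K_\tau u = P^\K_\tau \K u$.

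I do not anticipate a serious obstacle: the substance has already been absorbed into Lemmas \ref{L:invS} and \ref{L:eks}. The only step requiring a modicum of care is the verification of the joint Schwartz property in (a), which is nevertheless routine via the Fourier-side formula \eqref{FTPt}.
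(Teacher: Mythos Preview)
Your proposal is correct and follows essentially the same route as the paper. The only cosmetic differences are that the paper uses the \emph{full} Fourier transform in $(X,t)$ for part (a), obtaining
\[
\widehat{P^\K_\tau \psi}(\xi,\sigma) = e^{-\tau \operatorname{tr} B}\, e^{- 4 \pi^2 \langle C(\tau)\xi,\xi\rangle}\, e^{-2\pi i \tau \sigma}\, \hat{\psi}(e^{-\tau B^\star} \xi, \sigma),
\]
which is just the $t$-Fourier transform of your partial-Fourier formula, and for part (c) the paper packages your two commutations via the translation operator $\Lambda_{-\tau}$, writing $P^\K_\tau u = P_\tau(\Lambda_{-\tau} u)$ and $\K \Lambda_{-\tau}= \Lambda_{-\tau} \K$ rather than splitting $\K=\sA-\de_t$ explicitly.
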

\begin{proof}
 (a) The first part is obvious. For the second part it suffices to show that $\widehat{P^\K_\tau \psi}\in \S$ if $\psi\in\S$, and this follows from the following formula
 \[
\widehat{P^\K_\tau \psi}(\xi,\sigma) = e^{-\tau \operatorname{tr} B}  e^{- 4 \pi^2 <C(\tau)\xi,\xi>} e^{-2\pi i \tau \sigma} \hat{\psi}(e^{-\tau B^\star} \xi, \sigma).
\]
(b) Is a consequence of Lemma \ref{L:eks}. (c) Follows from the commutation property $\sA P_t=P_t\sA$ proved in Lemma \ref{L:invS}, and from the relations $P^\K_\tau u = P_\tau(\Lambda_{-\tau} u)$, $\K \Lambda_{-\tau}= \Lambda_{-\tau} \K$.

\end{proof}

\begin{lemma}\label{L:PtK}
The following properties hold:
\begin{itemize}
\item[(i)] For every $(X,t)\in \RNu$  and $\tau>0$ we have $P^\K_\tau 1(X,t)=1$;
\item[(ii)] We have $P^\K_{\tau+s} = P^\K_\tau \circ P^\K_s$ for every $s, \tau>0$.
\item[(iii)] $P^\K_\tau:L^\infty(\RNu) \to L^\infty(\RNu)$ with $||P^\K_\tau||_{L^\infty\to L^\infty} \le 1$;
 \item[(iv)] Let $1\le p<\infty$, then $P^\K_\tau:L^p(\RNu) \to L^p(\RNu)$ with $||P^\K_\tau||_{L^p\to L^p} \le e^{-\frac{\tau \operatorname{tr} B}p}$. 
 \item[(v)] If \eqref{trace} holds, $\{P^\K_\tau\}_{\tau>0}$ is a strongly continuous semigroup of contractions on $\Lp(\RNu)$.
\end{itemize}
\end{lemma}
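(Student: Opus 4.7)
My overarching plan is to push every property of $\{P^\K_\tau\}_{\tau>0}$ down from the corresponding property of $\{P_t\}_{t>0}$ in Lemma \ref{L:Pt}, using the remark made right after Definition \ref{D:eks} that $P^\K_\tau u = P_\tau(\Lambda_{-\tau} u)$: equivalently, for each fixed $t$ the function $X\mapsto P^\K_\tau u(X,t)$ is $P_\tau$ applied to the $Y$-slice $u(\cdot,t-\tau)$. With this identification, (i), (iii) and (iv) are essentially immediate: (i) is $\int p(X,Y,\tau)dY=1$ from Lemma \ref{L:Pt}(i); (iii) is the pointwise estimate $|P^\K_\tau u(X,t)|\le \|u\|_\infty$; (iv) follows by raising the slice-wise identity to the $p$-th power, integrating in $X$ via Lemma \ref{L:Pt}(iv) to gain the factor $e^{-\tau\operatorname{tr} B}$, and then integrating in $t$ while using the translation invariance of Lebesgue measure in the $t$-variable to absorb the shift $t\mapsto t-\tau$.

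For the semigroup identity (ii), I would substitute the Chapman--Kolmogorov relation of Lemma \ref{L:Pt}(v) into the definition of $P^\K_{\tau+s} u(X,t)$:
\begin{align*}
P^\K_{\tau+s}u(X,t) &= \int_{\RN} p(X,Y,\tau+s)\,u(Y,t-\tau-s)\,dY \\
&= \int_{\RN}\!\!\int_{\RN} p(X,Z,\tau)\,p(Z,Y,s)\,u(Y,(t-\tau)-s)\,dY\,dZ.
\end{align*}
Fubini--Tonelli on $|u|$ (legitimate by the Gaussian decay of $p$ for $u\in\mathscr S(\RNu)$, and extendable to $\Lpp$ by density once (iv) is in hand) recognizes the inner $Y$-integral as $P^\K_s u(Z,t-\tau)$ and the outer $Z$-integral as $P^\K_\tau(P^\K_s u)(X,t)$.

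Item (v) is where the substantive work lies. The contraction $\|P^\K_\tau\|_{\Lpp\to\Lpp}\le 1$ is immediate from (iv) under \eqref{trace}. For strong continuity on $\Lpp$ with $1\le p<\infty$ (with the appropriate replacement when $p=\infty$, per Remark \ref{R:infty}), I would set $T_\tau u(X,t):=(P_\tau u(\cdot,t))(X)$, the fiber-wise action of $P_\tau$ in time, so that $P^\K_\tau = T_\tau\circ \Lambda_{-\tau}$, and estimate
\[
\|P^\K_\tau u - u\|_p \,\le\, \|T_\tau\|_{p\to p}\,\|\Lambda_{-\tau}u - u\|_p + \|T_\tau u - u\|_p.
\]
The operator norm $\|T_\tau\|_{p\to p}\le 1$ follows from Fubini and Lemma \ref{L:Pt}(iv) under \eqref{trace}; the first summand vanishes as $\tau\to 0^+$ by the classical strong continuity of the translation group on $\Lpp$. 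For the second, I would apply Corollary \ref{C:Ptpzero} fiber-wise: for almost every $t$ one has $u(\cdot,t)\in L^p(\RN)$, so $\|P_\tau u(\cdot,t) - u(\cdot,t)\|_{L^p(\RN)}\to 0$, and dominated convergence in the $t$-integral with the integrable majorant $2^p\|u(\cdot,t)\|_{L^p(\RN)}^p$ (furnished precisely by the contraction bound from \eqref{trace}) yields $\|T_\tau u - u\|_p\to 0$.

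The main obstacle is this last dominated-convergence step in (v): the entire point of the hypothesis \eqref{trace} in this lemma is to produce a $\tau$-uniform integrable majorant for the fiber-wise difference. Everything else is a clean bookkeeping transfer from Lemma \ref{L:Pt} via the factorization $P^\K_\tau = T_\tau\Lambda_{-\tau}$.
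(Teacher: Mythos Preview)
Your approach is essentially the same as the paper's: both reduce everything to Lemma \ref{L:Pt} via the factorization $P^\K_\tau u = P_\tau(\Lambda_{-\tau} u)$, and the paper in fact only spells out the computation for (iv) (labelled (iii) there by a slip), which matches your slice-then-integrate argument verbatim. Your treatment of (v) is more detailed than the paper's bare assertion that it ``easily follows,'' and your splitting $P^\K_\tau u - u = T_\tau(\Lambda_{-\tau}u - u) + (T_\tau u - u)$ together with fiber-wise dominated convergence is a clean way to justify strong continuity; this is a legitimate filling-in of what the paper leaves implicit rather than a different route.
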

\begin{proof} The proof of the desired statements easily follows from Definition \ref{D:eks}, the identity $P^\K_\tau u = P_\tau(\Lambda_{-\tau} u)$ and Lemma \ref{L:Pt}. We only provide the details of (iii). Using the above mentioned ingredients and Tonelli's theorem we have for any $u\in\S$
\begin{align*}
& ||P^\K_\tau u||_{L^p(\RNu)} = \left(\int_\R ||P_\tau(\Lambda_{-\tau} u(\cdot,t))||^p_{L^p(\RN)} dt\right)^{1/p} \le e^{-\tau \frac{\operatorname{tr} B}p} \left(\int_\R ||\Lambda_{-\tau} u(\cdot,t)||^p_{L^p(\RN)} dt\right)^{1/p} \\
&=  e^{-\tau \frac{\operatorname{tr} B}p} \left(\int_\R ||u(\cdot,t)||^p_{L^p(\RN)} dt\right)^{1/p} =  e^{-\tau \frac{\operatorname{tr} B}p} ||u||_{L^p(\RNu)}.
\end{align*}
\end{proof}

We conclude the section with the analogue of Lemma \ref{L:Lprate} for the semigroup $\{P^\K_\tau\}_{\tau>0}$. Its proof proceeds along the same lines exploiting Lemma \ref{L:invSK} and Lemma \ref{L:PtK}.

\begin{lemma}\label{P:Kdiff}
Let $1\le p \le \infty$. Given any $f\in \S$ for any $\tau\in [0,1]$ we have
\[
||P^\K_t f - f||_{p} \le ||\K f||_{p}\ \omega(\tau),
\]
where $\omega(\tau)\le \max\left\{1,e^{-\frac{\operatorname{tr} B}p}\right\}\ \tau$.
\end{lemma}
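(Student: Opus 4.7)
The plan is to mimic exactly the argument used in Lemma \ref{L:Lprate}, now exploiting the ingredients collected in Lemma \ref{L:invSK} and Lemma \ref{L:PtK} for the evolutive semigroup $\{P^\K_\tau\}_{\tau>0}$ in place of those for $\{P_t\}_{t>0}$. Fix $f\in \S$. By part (a) of Lemma \ref{L:invSK} we have $\K f\in \S$ and $P^\K_\sigma f\in \S$ for every $\sigma>0$, so all the manipulations below are fully justified pointwise.

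First, combining part (b) of Lemma \ref{L:invSK} with the commutation identity in part (c), one writes for every $(X,t)\in \RNu$ and $\tau>0$,
\[
P^\K_\tau f(X,t) - f(X,t) \;=\; \int_0^\tau \frac{d}{d\sigma} P^\K_\sigma f(X,t)\, d\sigma \;=\; \int_0^\tau \K P^\K_\sigma f(X,t)\, d\sigma \;=\; \int_0^\tau P^\K_\sigma \K f(X,t)\, d\sigma.
\]
This is the exact analogue of the identity used in the proof of Lemma \ref{L:Lprate}. Note that the first equality relies on the continuity of $\tau\mapsto P^\K_\tau f$ at $\tau=0$, which is immediate from the Gaussian kernel estimates and Lemma \ref{L:eks}.

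Next, I apply Minkowski's integral inequality with respect to the ambient measure on $\RNu$ to obtain
\[
\|P^\K_\tau f - f\|_p \;\le\; \int_0^\tau \|P^\K_\sigma \K f\|_p\, d\sigma.
\]
At this stage I invoke the contractivity estimates from Lemma \ref{L:PtK}: for $1\le p<\infty$ part (iv) gives $\|P^\K_\sigma \K f\|_p \le e^{-\sigma\operatorname{tr} B/p}\|\K f\|_p$, while for $p=\infty$ part (iii) gives $\|P^\K_\sigma \K f\|_\infty \le \|\K f\|_\infty$ (consistent with the convention $a/\infty=0$ from the Notation subsection). Therefore
\[
\|P^\K_\tau f - f\|_p \;\le\; \|\K f\|_p \int_0^\tau e^{-\sigma\operatorname{tr} B/p}\, d\sigma \;=\; \|\K f\|_p\, \omega(\tau),
\]
with $\omega(\tau) := \int_0^\tau e^{-\sigma\operatorname{tr} B/p}\, d\sigma$.

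Finally, for $\tau\in[0,1]$ the claimed bound on $\omega(\tau)$ is an elementary calculus observation: if $\operatorname{tr} B\ge 0$ the integrand is $\le 1$, hence $\omega(\tau)\le \tau$; if $\operatorname{tr} B<0$ the integrand is monotone increasing, bounded by $e^{-\operatorname{tr} B/p}$ on $[0,1]$, hence $\omega(\tau)\le e^{-\operatorname{tr} B/p}\tau$. In either case $\omega(\tau)\le \max\{1,e^{-\operatorname{tr} B/p}\}\,\tau$, which is the desired estimate. There is no real obstacle here: the only point requiring any care is that all kernel differentiations and applications of Fubini–Tonelli are legitimate, and this is ensured by the Schwartz hypothesis on $f$ together with the Gaussian decay of $p(X,Y,\tau)$ already exploited in Lemma \ref{L:eks}.
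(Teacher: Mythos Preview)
Your proof is correct and follows exactly the approach indicated in the paper, which simply states that the argument proceeds along the same lines as Lemma \ref{L:Lprate}, exploiting Lemma \ref{L:invSK} and Lemma \ref{L:PtK} in place of Lemma \ref{L:invS} and Lemma \ref{L:Pt}. You have carried this out faithfully, including the same definition of $\omega(\tau)$ and the same case analysis on the sign of $\operatorname{tr} B$.
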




\section{The nonlocal operators $\As$, $(-\K)^s$ and their extension problems}\label{S:fracK}

Fix $0<s<1$. With the results of the previous section in hand we are now ready to introduce the definition of the nonlocal operators $\As$ and $(-\K)^s$.

\begin{definition}\label{D:Ks}
For any $\vf\in \So$ we define the nonlocal operator by the following pointwise formula
\begin{align}\label{As}
(-\mathscr A)^s \vf(X) & =  - \frac{s}{\G(1-s)} \int_0^\infty t^{-s-1} \left[P_t \vf(X) - \vf(X)\right] dt,\qquad X\in\RN.
\end{align}
Similarly, for $u \in\S$ and $(X,t)\in \RNu$, we define 
\begin{equation}\label{Ks}
\left(-\K\right)^s u(X,t) = - \frac{s}{\G(1-s)} \int_0^\infty \tau^{-1-s} \left[P^\K_\tau u(X,t)- u(X,t)\right] d\tau.
\end{equation}
\end{definition} 

\begin{remark}\label{R:KsAs}
We note explicitly that when $u(X,t) = u(X)$, then we obtain from \eqref{eks}
\[
P^\K_\tau u(X,t) \overset{def}{=} \int_{\R^N} p(X,Y,\tau) u(Y) dY = P_\tau u(X),\qquad (X,t)\in\RNu,\,\,\tau>0.
\]
In such case, formulas \eqref{Ks} and \eqref{As} give
$$
\left(-\K\right)^s u(X,t) = - \frac{s}{\G(1-s)} \int_0^\infty \tau^{-1-s} \left[P_\tau u(X)- u(X)\right] d\tau = (-\mathscr A)^s u(X).
$$
\end{remark}

As a first observation we note that the integrals in the right-hand side of \eqref{As}, \eqref{Ks} are convergent. To check this, for instance, for \eqref{Ks}, write
\begin{align*}
& \int_0^\infty \tau^{-1-s} \left[P^\K_\tau u(X,t)- u(X,t)\right] d\tau = 
\int_0^1 \tau^{-1-s} \left[P^\K_\tau u(X,t)- u(X,t)\right] d\tau
\\
& + \int_1^\infty \tau^{-1-s} \left[P^\K_\tau u(X,t)- u(X,t)\right] d\tau.
\end{align*}
In the second integral we use (ii) in Lemma \ref{L:PtK} which gives
\begin{align*}
& \tau^{-1-s} \left|P^\K_\tau u(X,t)- u(X,t)\right| \le \tau^{-1-s} \left(||P^\K_\tau u||_{L^\infty(\RNu)} + ||u||_{L^\infty(\RNu)}\right)
\\
& \le 2 ||u||_{L^\infty(\RNu)} \tau^{-1-s}\in L^1(1,\infty).
\end{align*}
For the first integral we use the crucial Lemma \ref{P:Kdiff}, that implies
\begin{align*}
& \tau^{-1-s} \left|P^\K_\tau u(X,t)- u(X,t)\right| \le \tau^{-1-s}  ||P^\K_\tau u - u||_{L^\infty(\RNu)} \le C \tau^{-s} \in L^1(0,1).
\end{align*}

\begin{remark}\label{R:L2K}
We emphasise that, because of the large-time behaviour of the semigroups $P_t$ and $P^\K_\tau$, when $1\leq p<\infty$ it may not be true in general that the function defined by the right-hand side of \eqref{As}, \eqref{Ks} be in $L^p$! We note however that, when \eqref{trace} holds, we can appeal to (iv) in Lemma \ref{L:Pt}, or (v) of Lemma \ref{L:PtK}, to show, by arguments similar to those above, that the equations \eqref{As}, \eqref{Ks} do define $\Lp$ functions.
\end{remark}

With Definition \ref{D:Ks} in hands we next introduce the extension problem for the nonlocal operator $(-\K)^s$. Following \cite{CS}, this is going to be a Dirichlet problem in one dimension up. Precisely, on the half-line $\R^+ = (0,\infty)$ with variable $z$ we consider the Bessel operator
$\Ba = \frac{\p^2}{\p z^2} + \frac az \frac{\p}{\p z}$ with $a> -1$.
We define the \emph{extension operator} as the following second-order partial differential operator in $\RNu\times (0,\infty)$
\begin{equation}\label{Ka}
\K_a = z^a(\K + \Ba) = z^a(\mathscr A + \Ba - \p_t).
\end{equation}

\begin{definition}\label{D:epKa}
The \emph{extension problem} consists in finding, for a given $u\in \S$, a function $U\in C^\infty(\RNu \times (0,\infty))$ such that
\begin{equation}\label{epKa}
\begin{cases}
\K_a U = 0\ \  \text{in}\ \RNu\times (0,\infty),
\\
U(X,t,0) = u(X,t).
\end{cases}
\end{equation}
\end{definition} 

In order to solve the problem \eqref{epKa} we are going to construct an appropriate Poisson kernel for it. Since the Bessel process  plays a pivotal role in what follows, we recall some well-known properties of the latter.
On the half-line $(0,\infty)$ we consider the Cauchy problem for $\Ba$ with the Neumann boundary condition (this corresponds to reflected Brownian motion, as opposed to killed Brownian motion, when a Dirichlet condition is imposed):  
\[
\begin{cases}
\p_t u  - \Ba u = 0,\ \ \ \ \ \ \ \text{in} \ (0,\infty)\times (0,\infty),
\\
u(z,0) = \vf(z),\ \ \ \ \ \ \ \ z\in (0,\infty),
\\
\underset{z\to 0^+}{\lim} \paa u(z,t) = 0.
\end{cases}
\]
The fundamental solution for this problem is given by
\begin{equation}\label{pa}
p^{(a)}(z,\zeta,t)  =(2t)^{-\frac{a+1}{2}}\left(\frac{z\zeta}{2t}\right)^{\frac{1-a}{2}}I_{\frac{a-1}{2}}\left(\frac{z\zeta}{2t}\right)e^{-\frac{z^2+\zeta^2}{4t}},
\end{equation}
where we have denoted by $I_\nu$ the modified Bessel function of the first kind. Formula \eqref{pa} is well-known in probability. For an explicit derivation based on purely analytical tools we refer the reader to  \cite[Section 22]{G18} or also \cite[Section 6]{EM}. We note that for every $z>0$ and $t>0$ one has
\begin{equation}\label{Patone}
\int_0^\infty p^{(a)}(z,\zeta,t) \zeta^a d\zeta = 1,
\end{equation}
see \cite[Proposition 2.3]{G18c}. Also, from \cite[Proposition 2.4]{G18c} we have
for every $z, \zeta>0$ and every $0<s, t<\infty$
\begin{equation}\label{ckpa}
p^{(a)}(z,\zeta,t) = \int_0^\infty p^{(a)}(z,\eta,t) p^{(a)}(\eta,\zeta,s) \eta^a d\eta.
\end{equation}

Using \eqref{pa} we now obtain the following result, whose verification is classical.

\begin{proposition}\label{P:Ga}
The \emph{Neumann fundamental solution} for the operator $\K_a$ in \eqref{Ka} with singularity at a point $(Y,\tau,\zeta)\in \RNu \times (0,\infty)$, is given by
\[
\mathscr G^{(a)}(X,t,z;Y,\tau,\zeta) = p(X,Y,t-\tau) p^{(a)}(z,\zeta,t-\tau),
\]
where $p(X,Y,t)$ is H\"ormander's fundamental solution of $\K$ in \eqref{PtKt} above.
\end{proposition}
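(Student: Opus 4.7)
The plan is to verify three defining properties of the candidate
$$\mathscr G^{(a)}(X,t,z;Y,\tau,\zeta) = p(X,Y,t-\tau)\,p^{(a)}(z,\zeta,t-\tau):$$
(a) that it solves $\K_a \mathscr G^{(a)} = 0$ in the complement of $\{(Y,\tau,\zeta)\}$; (b) that it satisfies the Neumann boundary condition $\lim_{z \to 0^+} z^a \de_z \mathscr G^{(a)} = 0$; and (c) that as $t \to \tau^+$ it concentrates to a Dirac mass at $(Y,\zeta)$ with respect to the natural weighted measure $dY \otimes \zeta^a d\zeta$.

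The heart of the argument is (a), and the key structural observation is that $\K_a = z^a(\sA + \Ba - \de_t)$ is built from a piece $\sA$ acting only on $X$, a piece $\Ba$ acting only on $z$, and a single shared time derivative. By Theorem \ref{T:hor}, the factor $p(X,Y,t-\tau)$ is annihilated by $\sA - \de_t$ in the $(X,t)$ variables for $t > \tau$, while by definition \eqref{pa}, the factor $p^{(a)}(z,\zeta,t-\tau)$ is annihilated by $\Ba - \de_t$ in the $(z,t)$ variables. Since $\sA$ commutes with multiplication by any function of $z$ (and similarly for $\Ba$ with respect to functions of $X$), applying the product rule to $\de_t$ yields
$$(\sA + \Ba - \de_t)(p\,p^{(a)}) = [(\sA - \de_t)p]\,p^{(a)} + p\,[(\Ba - \de_t)p^{(a)}] = 0,$$
and multiplication by $z^a$ gives (a).

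For (b), I would use the series expansion $I_\nu(w) = \sum_{k \ge 0} \frac{(w/2)^{2k+\nu}}{k!\,\Gamma(k+\nu+1)}$ with $\nu = (a-1)/2$, which shows that $w \mapsto w^{-\nu} I_\nu(w)$ is an even entire function of $w$. Combining this with \eqref{pa} and $w = z\zeta/(2(t-\tau))$, the kernel $p^{(a)}(z,\zeta,t-\tau)$ is smooth and even in $z$, so $\de_z p^{(a)}$ vanishes linearly at $z = 0$. Consequently $z^a \de_z p^{(a)}$ vanishes like $z^{a+1}$ (recall $a > -1$), and (b) follows since $p(X,Y,t-\tau)$ is independent of $z$.

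Finally for (c), given a test function $\phi$ with adequate decay, Fubini rewrites the weighted integral of $\mathscr G^{(a)}\,\phi$ against $dY\,\zeta^a d\zeta$ as an integral of $P_{t-\tau}(\phi(\cdot,\zeta))(X)$ against $p^{(a)}(z,\zeta,t-\tau)\zeta^a d\zeta$. I would then combine the strong continuity of $\{P_t\}_{t>0}$ from Corollary \ref{C:Ptpzero} with the fact that $\{p^{(a)}(z,\cdot,t)\zeta^a d\zeta\}_{t > 0}$ is an approximation of the identity on $(0,\infty)$, which is immediate from the normalization \eqref{Patone} and the Gaussian decay visible in \eqref{pa}. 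The main bookkeeping obstacle lies precisely in (c): one must handle the two asymptotic regimes in $X$ and in $z$ simultaneously, which I would dispatch through a standard dominated convergence argument rather than by iterating one-variable limits.
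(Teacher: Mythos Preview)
Your proposal is correct and supplies precisely the classical verification that the paper alludes to but omits (the paper states only that ``the verification is classical'' and gives no proof). The three-part check you outline---that the product $p\cdot p^{(a)}$ is annihilated by $\K_a$ via the product rule on $\de_t$, that the Neumann condition follows from the evenness of $w^{-\nu}I_\nu(w)$, and that the initial concentration follows from the separate approximate-identity properties of $p$ and $p^{(a)}$---is exactly the standard argument one would expect.
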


By \cite[Remark 22.27]{G18} we see that if the pole of $\mathscr G^{(a)}$ is on the thin manifold $\RNu\times \{0\}$, and in particular at $(Y,0,0)$, then we have
$$\mathscr G^{(a)}(X,t,z;Y,0,0)  = \frac{1}{2^{a} \G(\frac{a+1}2)} t^{-\frac{a+1}2} e^{-\frac{z^2}{4t}} p(X,Y,t).$$

We note the following two basic properties of $\mathscr G^{(a)}$.

\begin{proposition}\label{P:2p}
For every $X\in \RN$, $z>0$ and $t>0$ one has
\[
\int_{\Rnp}  \mathscr G^{(a)}(X,t,z;Y,0,\zeta) \zeta^a dY d\zeta = 1.
\]
Furthermore, for $X, Y\in \RN$, $z, \zeta\ge 0$ and $t, s >0$, one has
\[
\mathscr G^{(a)}(X,t+s,z;Y,0,\zeta) = \int_{\Rnp}  \mathscr G^{(a)}(X,t,z;Z,0,\eta) \mathscr G^{(a)}(Z,s,\eta;Y,0,\zeta) \eta^a dZ d\eta.
\]
\end{proposition}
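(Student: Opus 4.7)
The proof of both identities rests on the crucial product structure of the Neumann fundamental solution: from Proposition \ref{P:Ga} and the hypothesis that the pole lies on the thin manifold ($\tau=0$), we have the factorization
\[
\mathscr G^{(a)}(X,t,z;Y,0,\zeta) = p(X,Y,t)\,p^{(a)}(z,\zeta,t),
\]
which reduces each claim to a product of one integral on $\RN$ and one integral on $(0,\infty)$ with respect to the Bessel measure $\zeta^a d\zeta$. Since all integrands are non-negative, Tonelli's theorem will justify the factorization of the double integrals without further ado.

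For the first assertion, I would write
\[
\int_{\Rnp} \mathscr G^{(a)}(X,t,z;Y,0,\zeta)\,\zeta^a\,dY\,d\zeta = \left(\int_{\RN} p(X,Y,t)\,dY\right)\left(\int_0^\infty p^{(a)}(z,\zeta,t)\,\zeta^a\,d\zeta\right),
\]
and then apply part (i) of Lemma \ref{L:Pt} to conclude that the first factor equals $1$, together with identity \eqref{Patone} to conclude that the second factor also equals $1$.

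For the second assertion, I would factor the double integral over $\Rnp$ into
\[
\left(\int_{\RN} p(X,Z,t)\,p(Z,Y,s)\,dZ\right)\left(\int_0^\infty p^{(a)}(z,\eta,t)\,p^{(a)}(\eta,\zeta,s)\,\eta^a\,d\eta\right).
\]
The first factor equals $p(X,Y,t+s)$ by the Chapman--Kolmogorov equation recorded in part (v) of Lemma \ref{L:Pt}, while the second factor equals $p^{(a)}(z,\zeta,t+s)$ by the analogous Chapman--Kolmogorov property \eqref{ckpa} for the Bessel semigroup. Multiplying and invoking the product formula once more yields $p(X,Y,t+s)\,p^{(a)}(z,\zeta,t+s)=\mathscr G^{(a)}(X,t+s,z;Y,0,\zeta)$, as desired.

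The proof is genuinely routine once the product structure of $\mathscr G^{(a)}$ is recognized; there is no real obstacle to overcome, and the main ingredients (conservation of mass in the $X$-variable and in the $\zeta$-variable, plus the two Chapman--Kolmogorov identities) have all been recalled in the previous section. The only minor point worth mentioning is the use of Tonelli's theorem to separate the variables $(Y,\zeta)$ (resp.\ $(Z,\eta)$), which is legitimate because $p$ and $p^{(a)}$ are both pointwise positive.
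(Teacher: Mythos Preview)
Your proof is correct and follows essentially the same route as the paper's own argument: factor $\mathscr G^{(a)}$ via its product structure, apply Tonelli's theorem, and then invoke (i) of Lemma~\ref{L:Pt} together with \eqref{Patone} for the first identity, and (v) of Lemma~\ref{L:Pt} together with \eqref{ckpa} for the second. The paper's proof is identical in substance, only slightly more terse.
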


\begin{proof}
The proof of the first claim immediately follows from Tonelli's theorem, (i) in Lemma \ref{L:Pt} and from \eqref{Patone} above. To establish the second claim, we argue as follows. Tonelli's theorem again gives
\begin{align*}
& \int_{\Rnp}  \mathscr G^{(a)}(X,t,z;Z,0,\eta) \mathscr G^{(a)}(Z,s,\eta;Y,0,\zeta) \eta^a dZ d\eta
\\
& = \int_{\Rnp}  p(X,Z,t) p^{(a)}(z,\eta,t) p(Z,Y,s) p^{(a)}(\eta,\zeta,s) \eta^a dZ d\eta
\\
& = \int_{\RN} p(X,Z,t) p(Z,Y,s) dZ \int_0^\infty p^{(a)}(z,\eta,t) p^{(a)}(\eta,\zeta,s) \eta^a  d\eta
\\
& = p(X,Y,t+s) p^{(a)}(z,\zeta,t+s) = \mathscr G^{(a)}(X,t+s,z;Y,0,\zeta),
\end{align*}
where in the second to the last equality we have used Lemma \ref{L:Pt} $(v)$ and \eqref{ckpa} above.
\end{proof}
We refer the interested reader to the recent results in \cite[Section 5]{GTfi} for sharp pointwise estimates for $\mathscr G^{(a)}$ and the associated extension semigroup.

\begin{definition}\label{D:PaK}
We define the \emph{Poisson kernel} for the operator $\K_a$ as the function in $C^\infty(\RNu\times (0,\infty))$ given by
\begin{align*}
P^{(a)}_z(X,Y,t) & \overset{def}{=} - z^{-a} \p_z \mathscr G^{(-a)}(X,t,z;Y,0,0)
\\
& = \frac{1}{2^{1-a} \G(\frac{1-a}{2})} \frac{z^{1-a}}{t^{\frac{3-a}{2}}} e^{-\frac{z^2}{4t}}  p(X,Y,t).
\end{align*}
\end{definition}
We emphasize that, since $a\in (-1,1)$, we have $\frac{3-a}{2}>1$. 
The next result expresses a first basic property of the kernel $P^{(a)}_z(X,Y,t)$. 

\begin{proposition}\label{P:Pa1}
For every $(X,z)\in \RN \times \R^+$ one has
\[
\int_0^\infty \int_{\RN} P^{(a)}_z(X,Y,t) dY dt = 1.
\]
\end{proposition}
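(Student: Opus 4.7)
The plan is to apply Tonelli's theorem (the integrand is nonnegative) to do the $Y$-integral first, invoking property (i) of Lemma \ref{L:Pt} to collapse it to $1$, and then evaluate the resulting one-dimensional $t$-integral by reducing it to Euler's Gamma function via a standard change of variable.

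More concretely, inserting the explicit form of $P^{(a)}_z$ from Definition \ref{D:PaK}, I would write
\begin{align*}
\int_0^\infty \int_{\RN} P^{(a)}_z(X,Y,t)\, dY\, dt
= \frac{z^{1-a}}{2^{1-a}\Gamma\!\left(\tfrac{1-a}{2}\right)} \int_0^\infty \frac{e^{-z^2/(4t)}}{t^{(3-a)/2}} \left(\int_{\RN} p(X,Y,t)\, dY\right) dt.
\end{align*}
By (i) of Lemma \ref{L:Pt} we have $\int_{\RN} p(X,Y,t)\, dY = P_t 1(X) = 1$ for every $t>0$, so the double integral reduces to
\begin{align*}
\frac{z^{1-a}}{2^{1-a}\Gamma\!\left(\tfrac{1-a}{2}\right)} \int_0^\infty t^{-(3-a)/2} e^{-z^2/(4t)}\, dt.
\end{align*}

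To finish, I would perform the substitution $u = z^2/(4t)$, so that $t = z^2/(4u)$ and $dt = -z^2/(4u^2)\, du$. A short computation gives
\begin{align*}
\int_0^\infty t^{-(3-a)/2} e^{-z^2/(4t)}\, dt = \frac{2^{1-a}}{z^{1-a}} \int_0^\infty u^{(1-a)/2 - 1} e^{-u}\, du = \frac{2^{1-a}}{z^{1-a}}\, \Gamma\!\left(\tfrac{1-a}{2}\right),
\end{align*}
where the convergence of the Gamma integral at the origin uses the hypothesis $a\in(-1,1)$ (which forces $(1-a)/2\in (0,1)$, in particular positive). Substituting this back into the previous display, the constants cancel exactly and the total integral equals $1$, as claimed.

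There is really no serious obstacle here: this is a direct verification once one recognises the kernel as a product of the factor $p(X,Y,t)$, which integrates in $Y$ to $1$, times a one-dimensional profile in $(z,t)$ which is precisely the classical Bochner subordinator density (in the variable $t$ at fixed $z$), normalised so that its $t$-integral equals $1$. The only care needed is to justify the interchange of integrals—immediate from Tonelli since $P^{(a)}_z\ge 0$—and to keep track of the exponents of $z$ and $2$ to see that everything cancels.
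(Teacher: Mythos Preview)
Your proof is correct and follows exactly the same approach as the paper: apply Tonelli, use (i) of Lemma \ref{L:Pt} to reduce to a one-dimensional integral in $t$, and then observe that this integral equals $1$. The paper simply cites this last fact as \eqref{ga1}, whereas you spell out the change of variable $u=z^2/(4t)$ that verifies it.
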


\begin{proof}
By Definition \ref{D:PaK} and Tonelli's theorem we have
\[
\int_0^\infty \int_{\RN} P^{(a)}_z(X,Y,t) dY dt = \frac{1}{2^{1-a} \G(\frac{1-a}{2})} \int_0^\infty \frac{z^{1-a}}{t^{\frac{3-a}{2}}} e^{-\frac{z^2}{4t}} dt \int_{\RN} p(X,Y,t) dY.
\]
The desired conclusion now follows from (i) in Lemma \ref{L:Pt} and from the observation that for every $z>0$ one has
\begin{equation}\label{ga1}
\frac{1}{2^{1-a} \G(\frac{1-a}{2})} \int_0^\infty \frac{z^{1-a}}{t^{\frac{3-a}{2}}} e^{-\frac{z^2}{4t}} dt = 1.
\end{equation}

\end{proof}

Another crucial property of $P^{(a)}_z(X,Y,t)$ is that it satisfies the partial differential equation $\K_a P^{(a)}_z(X,Y,t) = 0$, where $\K_a$ is the extension operator in \eqref{Ka}.

\begin{proposition}\label{P:pdePa}
Fix $(Y,0,0)\in \RNu\times \{0\}$. Then, in every $(X,t,z)\in \RNu\times (0,\infty)$ with $t>0$, one has
\[
z^{-a} \K_a P^{(a)}_z(X,Y,t)  = \K P^{(a)}_z(X,Y,t) + \Ba P^{(a)}_z(X,Y,t) = 0.
\]
\end{proposition}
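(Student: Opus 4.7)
The plan is to separate the $(X,t)$-variables from the $z$-variable. Write
\[
P^{(a)}_z(X,Y,t) = c_a\, h(z,t)\, p(X,Y,t), \qquad h(z,t) = \frac{z^{1-a}}{t^{(3-a)/2}}\, e^{-z^2/(4t)},
\]
with $c_a = 1/(2^{1-a}\Gamma((1-a)/2))$. Since $\mathscr A$ and $\mathscr B^{(a)}_z$ act in disjoint variables, and $h$ does not depend on $X$ while $p$ does not depend on $z$, the product rule gives
\[
(\mathscr K + \mathscr B_z^{(a)})\, P^{(a)}_z = c_a\bigl[ h\,\mathscr A p - (h_t p + h p_t) + p\,\mathscr B_z^{(a)} h\bigr] = c_a\, h\,\mathscr K p + c_a\, p\,(\mathscr B_z^{(a)} h - h_t).
\]
So the statement reduces to two facts: (I) $\mathscr K p(X,Y,t)=0$ for $t>0$, and (II) $h$ solves the Bessel heat equation $h_t = \mathscr B_z^{(a)} h$ on $(0,\infty)\times(0,\infty)$.

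For (I), I would invoke Theorem \ref{T:hor}: given any $f\in\mathscr S$, the function $u(X,t) = \int_{\mathbb R^N} p(X,Y,t)f(Y)\,dY$ solves $\mathscr K u = 0$ for $t>0$. Differentiation under the integral sign (justified by the Gaussian decay of $p$ and of its derivatives in $X$ and $t$, as used already in the proof of Lemma \ref{L:eks}) transfers the equation pointwise to the kernel, yielding $\mathscr K p(X,Y,t) = \mathscr A p - \partial_t p = 0$ for every fixed $Y\in\mathbb R^N$ and $t>0$.

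For (II), I would present a direct computation. Writing $h = h(z,t)$ and using logarithmic derivatives,
\[
\frac{h_z}{h} = \frac{1-a}{z} - \frac{z}{2t}, \qquad \frac{h_t}{h} = -\frac{3-a}{2t} + \frac{z^2}{4t^2}.
\]
Then $h_{zz}/h = (h_z/h)^2 + (h_z/h)_z$ yields, after collecting terms,
\[
\frac{1}{h}\Bigl(h_{zz} + \frac{a}{z}\,h_z\Bigr) = -\frac{3-a}{2t} + \frac{z^2}{4t^2},
\]
the singular $z^{-2}$-terms cancelling exactly because of the prefactor $z^{1-a}$ with the Bessel weight $a$. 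This matches $h_t/h$ and proves $\mathscr B_z^{(a)} h = h_t$.

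The only computation of any length is step (II), and it is completely routine; the cancellation of the $z^{-2}$-terms is not an obstacle but rather the algebraic reflection of the fact that $P^{(a)}_z = -z^{-a}\partial_z \mathscr G^{(-a)}$, so that the intertwining $-z^{-a}\partial_z$ maps solutions of the $(-a)$-Bessel heat equation to solutions of the $a$-Bessel heat equation, consistently with the fact that $\mathscr G^{(-a)}$ is the Neumann fundamental solution introduced in Proposition \ref{P:Ga}. Combining (I) and (II), both summands in the displayed identity above vanish, and the conclusion $z^{-a}\mathscr K_a P^{(a)}_z = \mathscr K P^{(a)}_z + \mathscr B_z^{(a)} P^{(a)}_z = 0$ follows.
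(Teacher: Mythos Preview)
Your proof is correct and follows essentially the same approach as the paper: both factor $P^{(a)}_z = g^{(a)}(z,t)\,p(X,Y,t)$, use $\mathscr K p = 0$, and reduce to the identity $\mathscr B_z^{(a)} g^{(a)} = \partial_t g^{(a)}$, which is verified by direct computation. Your use of logarithmic derivatives and the extra remark on the intertwining $-z^{-a}\partial_z$ are pleasant touches but do not change the structure of the argument.
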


\begin{proof}
For ease of computation let us denote
\begin{equation}\label{ga}
g^{(a)}(z,t) = \frac{1}{2^{1-a} \G(\frac{1-a}{2})} \frac{z^{1-a}}{t^{\frac{3-a}{2}}} e^{-\frac{z^2}{4t}},
\end{equation}
so that 
\begin{equation}\label{prodPa}
P^{(a)}_z(X,Y,t) = g^{(a)}(z,t)  p(X,Y,t).
\end{equation}
Keeping in mind that $\K = \mathscr A - \p_t$, we have
\begin{align*}
& \K P^{(a)}_z(X,Y,t) + \Ba P^{(a)}_z(X,Y,t) = g^{(a)}(z,t) \K p(X,Y,t) - p(X,Y,t) \p_t g^{(a)}(z,t)
\\
& + p(X,Y,t) \Ba g^{(a)}(z,t). 
\notag
\end{align*}
Since $\K p(X,Y,t) = 0$, we infer
\begin{align*}
& \K P^{(a)}_z(X,Y,t) + \Ba P^{(a)}_z(X,Y,t) = p(X,Y,t) \left(\Ba g^{(a)}(z,t) - \p_t g^{(a)}(z,t)\right).
\end{align*}
A computation now gives
\begin{equation}\label{eqga}
\Ba g^{(a)}(z,t) = \left(\frac{z^2}{4t^2} - \frac{3-a}{2t}\right) g^{(a)}(z,t) = \p_t g^{(a)}(z,t).
\end{equation}
We infer that $\Ba g^{(a)}(z,t) - \p_t g^{(a)}(z,t) = 0$, thus reaching the desired conclusion.

\end{proof}

We finally establish a lemma that will prove critical in the proof of Proposition \ref{P:PKextsolution} below.

\begin{lemma}\label{L:limits}
For every $X, Y\in \RN$, and any $z>0$, we have
\[ 
P^{(a)}_z(X,Y,\infty) \overset{def}{=}  \underset{t\to \infty}{\lim} P^{(a)}_z(X,Y,t) = 0, \ \ \text{and} \ \ \  P^{(a)}_z(X,Y,0) \overset{def}{=} \underset{t\to 0^+}{\lim} P^{(a)}_z(X,Y,t) = 0.
\]
\end{lemma}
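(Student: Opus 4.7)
The plan is to work directly from the explicit product representation of $P^{(a)}_z$. Writing $P^{(a)}_z(X,Y,t) = g^{(a)}(z,t)\, p(X,Y,t)$ as in \eqref{prodPa}, and discarding the non-positive quadratic exponent in \eqref{PtKt}, one gets the pointwise Gaussian bound
\[
P^{(a)}_z(X,Y,t) \le \frac{(4\pi)^{-N/2}\, z^{1-a}}{2^{1-a}\G(\frac{1-a}{2})}\, t^{-(3-a)/2}\, e^{-z^2/(4t)}\, (\det(tK(t)))^{-1/2},
\]
and both limits will be extracted from this single inequality.

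For the small-time limit, I would observe that $tK(t) = \int_0^t e^{sB} Q e^{sB^\star}\,ds$ is a real-analytic matrix-valued function of $t$ that vanishes at $t=0$ and is positive definite for every $t>0$ by hypoellipticity. Consequently $\det(tK(t))$ is a positive real-analytic function on $(0,\infty)$ with a zero of finite order at $t=0$, so there exist $M, C_1 > 0$ with $(\det(tK(t)))^{-1/2} \le C_1 t^{-M}$ for $t\in (0,1]$. The super-exponential decay of $e^{-z^2/(4t)}$ as $t\to 0^+$ dominates the polynomial blow-up $t^{-(3-a)/2 - M}$, and the first limit follows.

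For the large-time limit, the key observation is the differential identity
\[
\frac{d}{dt}\bigl(tK(t)\bigr) = e^{tB} Q e^{tB^\star} \ge 0,
\]
which means $t \mapsto tK(t)$ is non-decreasing on the cone of positive semidefinite matrices. Since $\det$ is monotone on this cone (if $A \ge B > 0$, the identity $\det A = \det B \cdot \det(I + B^{-1/2}(A-B)B^{-1/2})$ shows $\det A \ge \det B$), the scalar $\det(tK(t))$ is non-decreasing in $t$, so $(\det(tK(t)))^{-1/2} \le (\det K(1))^{-1/2}$ for every $t \ge 1$. Using that $(3-a)/2 > 1$ (since $a < 1$), the remaining factor $t^{-(3-a)/2} e^{-z^2/(4t)}$ tends to $0$, and the second limit follows.

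The main subtlety I want to avoid is that the naive splitting $(\det(tK(t)))^{-1/2} = e^{-t\operatorname{tr}(B)}(\det C(t))^{-1/2}$ coming from \eqref{pcomeG} blows up exponentially when $\operatorname{tr} B < 0$ (as in the Ornstein-Uhlenbeck example with $B = -I_N$), so one must retain the cancellation present in the product. The intrinsic monotonicity of $tK(t)$ as a PSD-valued function gives a clean uniform bound at infinity without invoking the trace assumption \eqref{trace}.
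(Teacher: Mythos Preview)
Your proof is correct and follows essentially the same route as the paper's: both use the product form \eqref{prodPa}, the monotonicity of $t\mapsto tK(t)$ to bound $p(X,Y,t)$ uniformly for large $t$, and a polynomial bound on $(\det(tK(t)))^{-1/2}$ near $t=0$ that is beaten by the exponential decay of $g^{(a)}(z,t)$. The only minor difference is that you obtain the polynomial bound at $t=0$ from real-analyticity of $t\mapsto tK(t)$, whereas the paper passes to the $C(t)$ representation \eqref{pcomeG} and invokes the precise asymptotic $\det C(t)\sim t^{D_0}$ from \cite{LP}; your argument is slightly more self-contained in this respect.
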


\begin{proof}
We begin by observing that, by the definition of $K(t)$ in \eqref{Kt}, we have the monotonicity of $t\mapsto tK(t)$ (in the sense of matrices). This implies that, if we fix arbitrarily a number $t_0>0$, then by \eqref{PtKt} we have for every $t\geq t_0$ and
for all $X,Y\in\RN$
\[
0< p(X,Y,t)\leq \frac{\left(  4\pi\right)  ^{-N/2}}{\sqrt{\det \left(t_0K\left(  t_0\right)\right)  }}.
\]
Since on the other hand it is obvious from \eqref{ga} that for every $z>0$ we have $\underset{t\to \infty}{\lim} g^{(a)}(z,t) = 0$, then the conclusion regarding $P^{(a)}_z(X,Y,\infty)$ follows immediately by \eqref{prodPa}.\\
Concerning the behavior near $t=0$, we start noticing that, for every $X,Y$, and $t$, by the expression in \eqref{pcomeG} we easily have 
\[
0\leq p(X,Y,t)\leq \frac{\left(  4\pi\right)  ^{-N/2}}{\sqrt{\det C\left(  t\right)  }}
e^{ - t\, \mathrm{tr} B}.
\]
Furthermore, it can be seen from its definition that the matrix $C(t)$ (and thus $\det C\left(  t\right)$) behaves polynomially at $t=0$. We can in fact write, as $t\rightarrow 0^+$, $C(t)=tQ-\frac{1}{2}t^2\left(BQ+QB^\star\right)+o(t^2)$. More precisely, it is proved in \cite[equation (3.14) and Proposition 2.3]{LP} that $\det C\left(  t\right)$ is asymptotic to $t^{D_0}$ as $t\rightarrow 0^+$, where $D_0$ is the homogeneous dimension of a suitable homogeneous operator associated with $\K$. Hence, since $g^{(a)}(z,t)$ tends to $0$ exponentially for every $z>0$, we can conclude the proof by using again \eqref{prodPa}.

\end{proof}


\section{Solving the extension problem for $(-\K)^s$}\label{S:epKs}

In this section we solve the extension problem \eqref{epKa}. Using the Poisson kernel  $P^{(a)}_z(X,Y,t) $ we define an explicit solution formula, and prove that the latter does actually solve the problem \eqref{epKa}. 
The following theorem contains one of the main results of the present paper.

\begin{theorem}\label{T:epKinfty}
Given $0<s<1$, let $a = 1-2s$. Let $\K$ be given as in \eqref{K0}, with the assumption $K(t)>0$ for $t>0$  in force. Let $u\in \S$ and consider the function defined by the equation
\begin{align}\label{UKa}
U(X,t,z) & = \int_{-\infty}^t \int_{\RN} P^{(a)}_z(X,Y,t-\tau) u(Y,\tau) dY d\tau 
\\
& = \int_0^\infty \int_{\RN} P^{(a)}_z(X,Y,\tau) u(Y,t-\tau) dY d\tau .
\notag
\end{align}
Then, $U\in C^\infty(\RNu \times (0,\infty))$, and $U$ solves the extension problem in $L^\infty(\RNu)$, in the sense that we have $\K_a U = 0$ in $\RNu\times (0,\infty)$, and moreover
\begin{equation}\label{convKainfty}
\underset{z\to 0^+}{\lim} ||U(\cdot,\cdot,z) - u||_{L^\infty(\RNu)} = 0.
\end{equation}
Furthermore, we also have in $L^\infty(\RNu)$
\begin{equation}\label{nconvKainfty}
- \frac{2^{-a} \Gamma\left(\frac{1-a}2\right)}{\Gamma\left(\frac{1+a}2\right)}  \underset{z\to 0^+}{\lim} \paa U(\cdot,\cdot,z) = (-\K)^s u.
\end{equation}
\end{theorem}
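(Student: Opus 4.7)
The unifying observation is that, by Definition \ref{D:PaK} and Fubini, the candidate admits the compact representation
\[
U(X,t,z) = \int_0^\infty g^{(a)}(z,\tau)\, P^\K_\tau u(X,t)\, d\tau,
\]
where $g^{(a)}$ is as in \eqref{ga} and the evolutive semigroup $P^\K_\tau$ is from Definition \ref{D:eks}. With this representation I would verify the four assertions of the theorem in turn: smoothness on $\R^{N+1}\times(0,\infty)$, the equation $\K_a U=0$, the Dirichlet trace \eqref{convKainfty}, and the Neumann-type identity \eqref{nconvKainfty}.

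For smoothness and the PDE I would differentiate under the integral sign: the Gaussian estimates on $p$ and all its derivatives (together with the Schwartz decay of $u$) dispose of the $(X,t)$-variables, while the factor $e^{-z^2/(4\tau)}$ kills any polynomial blow up of $\partial_z^k g^{(a)}$ at $\tau = 0^+$ and the factor $\tau^{-(3-a)/2}$ is integrable at infinity. To obtain the equation I would combine the two evolution identities $\K P^\K_\tau u = \partial_\tau P^\K_\tau u$ (Lemma \ref{L:eks}) and $\Ba g^{(a)} = \partial_\tau g^{(a)}$ (equation \eqref{eqga}) into
\[
(\K+\Ba)U(X,t,z) = \int_0^\infty \partial_\tau\!\left(g^{(a)}(z,\tau)\, P^\K_\tau u(X,t)\right) d\tau,
\]
and note that the boundary terms at $\tau = 0^+$ and $\tau = \infty$ vanish thanks, respectively, to $g^{(a)}(z,\tau)\to 0$ exponentially as $\tau\to 0^+$ (because of $e^{-z^2/(4\tau)}$), to the polynomial decay of $g^{(a)}$ at infinity, and to the uniform bound $\|P^\K_\tau u\|_\infty \le \|u\|_\infty$.

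For the Dirichlet condition I would use the normalization $\int_0^\infty g^{(a)}(z,\tau)d\tau = 1$ (which is \eqref{ga1}) and the scaling $\tau = z^2 r$, turning $g^{(a)}(z,\tau)d\tau$ into a $z$-independent probability density $h(r)dr$ on $(0,\infty)$. Then
\[
U(X,t,z) - u(X,t) = \int_0^\infty h(r)\left[P^\K_{z^2 r} u(X,t) - u(X,t)\right] dr,
\]
and by Lemma \ref{P:Kdiff} the integrand is dominated in $L^\infty(\R^{N+1})$ by $2\|u\|_\infty h(r)$ and tends to zero for each fixed $r>0$ as $z\to 0^+$; dominated convergence applied to $\|P^\K_{z^2 r}u - u\|_\infty$ yields \eqref{convKainfty}.

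The most delicate step is the fractional boundary condition \eqref{nconvKainfty}. Differentiating under the integral and exploiting $\int_0^\infty \partial_z g^{(a)}(z,\tau)d\tau = 0$, I would write
\[
z^a \partial_z U(X,t,z) = \int_0^\infty z^a\partial_z g^{(a)}(z,\tau)\left[P^\K_\tau u(X,t) - u(X,t)\right] d\tau,
\]
and compute explicitly
\[
z^a\partial_z g^{(a)}(z,\tau) = \frac{\tau^{-(3-a)/2}}{2^{1-a}\Gamma((1-a)/2)}\, e^{-z^2/(4\tau)}\!\left[(1-a) - \tfrac{z^2}{2\tau}\right].
\]
Setting $r = z^2/(4\tau)$, the bracket $[(1-a)-2r]\,e^{-r}$ is bounded on $[0,\infty)$, so the kernel is dominated by a constant multiple of $\tau^{-(3-a)/2}$ uniformly in $z$. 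Combined with Lemma \ref{P:Kdiff} this gives the $(X,t,z)$-uniform dominator $C\,\tau^{-(3-a)/2}\min(\tau,1)$, which is integrable on $(0,\infty)$ precisely because $-1 < a < 1$: near $0$ it behaves as $\tau^{(a-1)/2}$ with $(a-1)/2>-1$, at infinity as $\tau^{-(3-a)/2}$ with $(3-a)/2>1$. Dominated convergence then yields the $L^\infty$-limit
\[
\lim_{z\to 0^+} z^a \partial_z U = \frac{1-a}{2^{1-a}\Gamma((1-a)/2)}\int_0^\infty \tau^{-(3-a)/2}\left[P^\K_\tau u - u\right]d\tau,
\]
and multiplying by $-2^{-a}\Gamma((1-a)/2)/\Gamma((1+a)/2)$ and substituting $s = (1-a)/2$, $1-s = (1+a)/2$, $-s-1=-(3-a)/2$ produces exactly the right-hand side of \eqref{Ks}. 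The main obstacle is producing this uniform-in-$(X,t,z)$ dominator, which requires carefully balancing the decay $e^{-z^2/(4\tau)}$ against the growth $z^2/(2\tau)$ appearing in $\partial_z g^{(a)}$; once that is in hand, the limit is reduced to a routine application of dominated convergence.
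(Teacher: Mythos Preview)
Your proposal is correct and follows essentially the same strategy as the paper: both hinge on the semigroup representation $U=\int_0^\infty g^{(a)}(z,\tau)P^\K_\tau u\,d\tau$, Lemma \ref{P:Kdiff} for small $\tau$, contractivity for large $\tau$, and dominated convergence.

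There are a few organizational differences worth noting. For the equation $\K_a U=0$, the paper differentiates under the integral in \eqref{UKa} and invokes Proposition \ref{P:pdePa} on the kernel $P^{(a)}_z$, whereas you work directly with the semigroup representation and integrate $\partial_\tau\bigl(g^{(a)}P^\K_\tau u\bigr)$ by parts; your route is slightly more streamlined but requires the same ingredients (\eqref{eqga} and Lemma \ref{L:eks}). For the Dirichlet trace, the paper splits the $\tau$-integral at $\tau=1$ and shows the difference is $O(z^{1-a})$, while your scaling $\tau=z^2 r$ turns $g^{(a)}(z,\tau)\,d\tau$ into a fixed probability density and reduces everything to a single dominated-convergence step; this is a clean shortcut. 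For the Neumann limit, the paper separates the two terms arising from $\partial_z g^{(a)}$ into $I(z)$ and $II(z)$ and treats them individually, whereas you keep them together via the uniform bound on $[(1-a)-2r]e^{-r}$; the resulting dominator $C\,\tau^{-(3-a)/2}\min(\tau,1)$ is exactly the one the paper uses for $I(z)$.

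One place where your sketch is thinner than the paper is the justification of $C^\infty$-regularity in $X$: the paper explicitly controls $\nabla_X p(X,Y,\tau)$ for large $\tau$ via the monotonicity of $C(\tau)$ and $\tau K(\tau)$ (since $p$ itself need not decay in $\tau$), and you should be aware that ``Gaussian estimates'' alone do not immediately give a $\tau$-uniform bound without this monotonicity argument.
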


\begin{proof}
We first prove that $U\in C^\infty(\RNu \times (0,\infty))$. With $(X,t,z)\in \RNu \times (0,\infty)$ fixed, we want to differentiate under the integral sign around $(X,t,z)$ by using the second equality in \eqref{UKa}. From \eqref{prodPa} and the Gaussian character of $g^{(a)}$ in \eqref{ga}, there is no problem in differentiating with respect to the $z$-variable. Moreover, since $u\in\S$ and it has bounded $t$-derivatives, also $\de_t U$ can be performed easily. The problems might arise when we differentiate with respect to $X$, and in particular concerning the behavior in $\tau$ (for both $\tau\rightarrow 0^+$ and $\tau\rightarrow \infty$) of
$$\nabla_X\left( P^{(a)}_z(X,Y,\tau) u(Y,t-\tau)\right)=g^{(a)}(z,\tau)u(Y,t-\tau)\nabla_X p(X,Y,\tau).$$
A direct computation shows that
\begin{equation}\label{Xderp}
\nabla_X p(X,Y,\tau)=-\frac{1}{2}C^{-1}(\tau)\left(X-e^{-\tau B}Y\right)p(X,Y,\tau).
\end{equation}
On one side, for small $\tau$, we can bound
$$|\nabla_X p(X,Y,\tau)|\leq c(N,B) \left\|C^{-1}(\tau)\right\|\left(|X|+|Y|\right)p(X,Y,\tau),$$
and we can use the fact that, as we have mentioned in  $C(\tau)$ behaves like a polynomial for small $\tau$ (see, e.g., \cite[Lemma 3.3]{LP} for a precise behavior). Hence, thanks to the Gaussian behavior of $g^{(a)}(z,\tau)p(X,Y,\tau)$ (we recall that $z>0$ and $u\in\S$), we can find a uniform bound for $\left|\nabla_X\left( P^{(a)}_z(X,Y,\tau) u(Y,t-\tau)\right)\right|$ which is in $L^1(\RN\times (0,1))$. We now have to consider the behavior for large values of $\tau$. We notice that we can write
\begin{eqnarray*}
&&|C^{-1}(\tau)\left(X-e^{-\tau B}Y\right)|^2\leq \left\|C^{-\frac{1}{2}}(\tau)\right\|^2\left\langle C^{-1}(\tau)\left(X-e^{-\tau B}Y\right),\left(X-e^{-\tau B}Y\right) \right\rangle\\
&\leq& \left\|C^{-\frac{1}{2}}(\tau)\right\|^2\left(\left\langle C^{-1}(\tau)X,X \right\rangle + \left\langle e^{-\tau B^\star}C^{-1}(\tau)e^{-\tau B}Y,Y \right\rangle+\right.\\
&+&\left.2\left\langle C^{-1}(\tau)X,X \right\rangle^{\frac{1}{2}}\left\langle e^{-\tau B^\star}C^{-1}(\tau)e^{-\tau B}Y,Y \right\rangle^{\frac{1}{2}}\right).
\end{eqnarray*}
Furthermore, from $C(t) = \int_0^t e^{-sB} Q e^{-sB^\star} ds$ it is obvious that $C(\tau)\geq C(\tau_0)$ for all $\tau\geq \tau_0>0$, and \eqref{KC} gives $e^{\tau B}C(\tau)e^{\tau B^\star}=\tau K(\tau)\geq \tau_0 K(\tau_0)= e^{\tau_0 B}C(\tau_0)e^{\tau_0 B\star}$. Fixing $\tau_0=1$, we then infer that for all $\tau\geq 1$,
\begin{eqnarray*}
&&|C^{-1}(\tau)\left(X-e^{-\tau B}Y\right)|^2\leq \left\|C^{-\frac{1}{2}}(1)\right\|^2\left(\left\langle C^{-1}(1)X,X \right\rangle + \left\langle e^{-B^\star}C^{-1}(1)e^{-B}Y,Y \right\rangle+\right.\\
&+&\left.2\left\langle C^{-1}(1)X,X \right\rangle^{\frac{1}{2}}\left\langle e^{-B^\star}C^{-1}(1)e^{- B}Y,Y \right\rangle^{\frac{1}{2}}\right).
\end{eqnarray*}
This estimate, together with \eqref{Xderp} and the behaviour of $g^{(a)}(z,\tau)$ for large values of $\tau$, allows to find a uniform bound for $\left|\nabla_X\left( P^{(a)}_z(X,Y,\tau) u(Y,t-\tau)\right)\right|$ which is in $L^1(\RN\times (1,+\infty))$. This proves that we can differentiate (at least one time) $U$ under the integral sign around any $(X,t,z)$. We can argue in the same way for derivatives of arbitrary order. Therefore, $U\in C^\infty(\RNu \times (0,\infty))$ and, by Proposition \ref{P:pdePa}, we can say that
$$ \K_aU(X,t,z)=\int_0^\infty \int_{\RN} \K_a P^{(a)}_z(X,Y,\tau) u(Y,t-\tau) dY d\tau =0$$
for all $(X,t,z)\in\RNu\times (0,+\infty)$. As a second step we show that \eqref{convKainfty} holds. To reach this goal we make the  observation that $U$ can be written in the following form using the semigroup $P^\K_\tau$
\begin{equation}\label{rem}
U(X,t,z) = \frac{1}{2^{1-a} \G(\frac{1-a}{2})} z^{1-a} \int_0^\infty \frac{1}{\tau^{\frac{3-a}{2}}} e^{-\frac{z^2}{4\tau}} P^\K_\tau u(X,t) d\tau.
\end{equation}
To recognize the validity of \eqref{rem} we use the second equality in \eqref{UKa} and \eqref{prodPa} to find
\begin{align*}
U(X,t,z) & = \int_0^\infty \int_{\RN} P^{(a)}_z(X,Y,\tau) u(Y,t-\tau) dY d\tau 
\\
&= \int_0^\infty g^{(a)}(z,\tau) \left(\int_{\RN} p(X,Y,\tau) u(Y,t-\tau) dY\right) d\tau
\\
& = \int_0^\infty g^{(a)}(z,\tau) P^\K_\tau u(X,t) d\tau,
\end{align*}
where in the last equality we have used \eqref{eks} above. Keeping \eqref{ga} in mind, we have proved \eqref{rem}.

In view of \eqref{ga1} we now obtain from \eqref{rem} that we can also write
\begin{align}\label{beauty00}
& U(X,t,z) - u(X,t) 
\\
& = \frac{1}{2^{1-a} \G(\frac{1-a}{2})} z^{1-a} \int_0^\infty \frac{1}{\tau^{\frac{3-a}{2}}} e^{-\frac{z^2}{4\tau}} \left[P^\K_\tau u(X,t) - u(X,t)\right] d\tau.
\notag
\end{align}
Using the representation \eqref{beauty00} we can now write
\begin{align*}
& ||U(\cdot,\cdot,z) - u||_{L^\infty(\RNu)} 
\\
&  \le \frac{1}{2^{1-a} \G(\frac{1-a}{2})} z^{1-a} \int_0^1 \frac{1}{\tau^{\frac{3-a}{2}}} e^{-\frac{z^2}{4\tau}} \left\|P^\K_\tau u - u\right\|_{L^\infty(\RNu)}  d\tau
\\
& + \frac{1}{2^{1-a} \G(\frac{1-a}{2})} z^{1-a} \int_1^\infty \frac{1}{\tau^{\frac{3-a}{2}}} e^{-\frac{z^2}{4\tau}} \left\|P^\K_\tau u - u\right\|_{L^\infty(\RNu)}  d\tau.
\end{align*}
In the second integral we use the contractivity of $P^\K_\tau$ on $L^\infty(\RNu)$ (Lemma \ref{L:PtK}) to bound
\[
\frac{1}{\tau^{\frac{3-a}{2}}} e^{-\frac{z^2}{4\tau}} \left\|P^\K_\tau u - u\right\|_{L^\infty(\RNu)} \le 2 \left\|u\right\|_{L^\infty(\RNu)} \frac{1}{\tau^{\frac{3-a}{2}}}\in L^1(1,\infty),
\]
since  $\frac{3-a}{2}>1$. In the first integral, instead, we need to crucially use the rate in Lemma \ref{P:Kdiff}
\[
\left\|P_\tau^\K u - u\right\|_{L^\infty(\RNu)} = O(\tau),
\]
to estimate
\begin{align*}
& \int_0^1 \frac{1}{\tau^{\frac{3-a}{2}}} e^{-\frac{z^2}{4\tau}} \left\|P^\K_\tau u - u\right\|_{L^\infty(\RNu)}  d\tau 
 \le C \int_0^1 \frac{1}{\tau^{\frac{1-a}{2}}}  d\tau < \infty, 
\end{align*}
since $0< \frac{1-a}{2}<1$. 
In conclusion, the right-hand side in \eqref{beauty00} goes to $0$ in $L^\infty(\RNu)$ norm with $z^{1-a}$, and since $1-a>0$, we have demonstrated \eqref{convKainfty}.

In order to complete the proof we are left with establishing \eqref{nconvKainfty}. The proof of this hinges again on the representation formula \eqref{beauty00}. Differentiating it, we find
\begin{align}\label{wow}
& - \frac{2^{-a} \Gamma\left(\frac{1-a}2\right)}{\Gamma\left(\frac{1+a}2\right)}  \paa U(X,t,z) 
\\
& = - \frac{1-a}{2\Gamma\left(\frac{1+a}2\right)}  \int_0^\infty \frac{1}{\tau^{\frac{3-a}{2}}} e^{-\frac{z^2}{4\tau}} \left[P^\K_\tau u(X,t) - u(X,t)\right] d\tau 
\notag
\\
& +  \frac{1}{4\Gamma\left(\frac{1+a}2\right)} z^2 \int_0^\infty \frac{1}{\tau^{\frac{3-a}{2}}} e^{-\frac{z^2}{4\tau}} \left[P^\K_\tau u(X,t) - u(X,t)\right] \frac{d\tau}\tau.
\notag
\end{align}
On the other hand, keeping in mind that $a = 1-2s$, we can rewrite the definition \eqref{Ks} as follows
\begin{equation}\label{newclothes}
(-\K)^s u(X,t) = - \frac{1-a}{2\Gamma\left(\frac{1+a}2\right)}  \int_0^\infty \frac{1}{\tau^{\frac{3-a}{2}}} \left[P^\K_\tau u(X,t) - u(X,t)\right] d\tau.
\end{equation}
Subtracting \eqref{newclothes} from \eqref{wow} we thus find
\begin{align*}
& \left\|- \frac{2^{-a} \Gamma\left(\frac{1-a}2\right)}{\Gamma\left(\frac{1+a}2\right)}  \paa U(\cdot,\cdot,z) - (-\K)^s u\right\|_{L^\infty(\RNu)} 
\\
& \le \frac{1-a}{2\Gamma\left(\frac{1+a}2\right)}  \int_0^\infty \frac{1}{\tau^{\frac{3-a}{2}}} \left|e^{-\frac{z^2}{4\tau}} - 1\right| \left\|P^\K_\tau u - u\right\|_{L^\infty(\RNu)}  d\tau 
\\
& + \frac{z^2}{4\Gamma\left(\frac{1+a}2\right)}  \int_0^\infty \frac{1}{\tau^{\frac{3-a}{2}}} e^{-\frac{z^2}{4\tau}} \left\|P^\K_\tau u - u\right\|_{L^\infty(\RNu)}  \frac{d\tau}\tau 
\\
& = I(z) + II(z).
\end{align*}
To complete the proof of the theorem it suffices to show that both $I(z), II(z) \longrightarrow 0$ as $z\to 0^+$. We handle $II(z)$ as follows
\begin{align*}
& II(z) \cong z^2 \int_0^1 \frac{1}{\tau^{\frac{1-a}{2}}} e^{-\frac{z^2}{4\tau}}  \frac{d\tau}\tau
 + z^2 \int_1^\infty \frac{1}{\tau^{\frac{3-a}{2}}} \frac{d\tau}\tau
 \\
 & = O(z^{1+a}) \ \longrightarrow\ 0\ \ \ \ \ \text{since}\ a\in (-1,1).
\end{align*}
For $I(z)$ we consider the integrand
\[
0\le g_z(\tau) \overset{def}{=} \frac{1}{\tau^{\frac{3-a}{2}}} \left|e^{-\frac{z^2}{4\tau}} - 1\right| \left\|P^\K_\tau u - u\right\|_{L^\infty(\RNu)}, \ \ \ \ \ \ 0<\tau <\infty.
\]
We clearly have $g_z(\tau) \to 0$ as $z\to 0^+$ for every $\tau>0$. On the other hand, there exists an absolute constant $C>0$ and a function $g\in L^1(0,\infty)$ such that $0\le g_z(\tau) \le C g(\tau)$ for every $\tau >0$. 
In fact, using Lemmas \ref{L:PtK} and \ref{P:Kdiff}
it is not difficult to convince oneself that we can take
\[
g(\tau) = \begin{cases}
\frac{1}{\tau^{\frac{1-a}{2}}} \ \ \ \ \ \ 0<\tau \le 1,
\\
\frac{1}{\tau^{\frac{3-a}{2}}}\ \ \ \ \ \ \ 1<\tau<\infty.
\end{cases}
\]
By Lebesgue dominated convergence we conclude that $I(z)\to 0$ as $z\to 0^+$.
\end{proof}

We can now state the second main result in this paper.

\begin{theorem}\label{T:epKL2}
Suppose that \eqref{trace} holds. 
 Let $u\in \S$ and consider the function $U$ defined by \eqref{UKa} above.
Then, $U\in C^\infty(\RNu \times (0,\infty))$, and $U$ solves the extension problem in $L^p(\RNu)$ for any $1\leq p<\infty$. In the sense that we have $\K_a U = 0$ in $\RNu\times (0,\infty)$, and moreover
\begin{equation}\label{convKaL2}
\underset{z\to 0^+}{\lim} ||U(\cdot,\cdot,z) - u||_{L^p(\RNu)} = 0.
\end{equation}
Furthermore, we also have in $L^p(\RNu)$
\begin{equation}\label{nconvKaL2}
- \frac{2^{-a} \Gamma\left(\frac{1-a}2\right)}{\Gamma\left(\frac{1+a}2\right)}  \underset{z\to 0^+}{\lim} \paa U(\cdot,\cdot,z) = (-\K)^s u.
\end{equation}
\end{theorem}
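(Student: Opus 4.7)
The plan is to follow the same three-step scheme as in Theorem \ref{T:epKinfty}, replacing $L^\infty(\RNu)$ norms by $L^p(\RNu)$ norms throughout and exploiting hypothesis \eqref{trace} to keep the required bounds on $\{P^\K_\tau\}_{\tau>0}$ under control. The $C^\infty$ regularity of $U$ on $\RNu \times (0,\infty)$, together with the identity $\K_a U = 0$ there, were established in the proof of Theorem \ref{T:epKinfty} without any use of \eqref{trace} (they rely only on the Gaussian decay of $p(X,Y,\tau)$ and the Schwartz character of $u$), and therefore transfer verbatim.

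For the Dirichlet boundary condition \eqref{convKaL2}, the starting point is the pointwise representation \eqref{beauty00}. Applying Minkowski's integral inequality in $L^p(\RNu)$ yields
\[
\|U(\cdot,\cdot,z) - u\|_{L^p(\RNu)} \le C_a\, z^{1-a} \int_0^\infty \tau^{-\frac{3-a}{2}} e^{-\frac{z^2}{4\tau}} \|P^\K_\tau u - u\|_{L^p(\RNu)}\, d\tau,
\]
with $C_a = \bigl(2^{1-a}\G(\tfrac{1-a}{2})\bigr)^{-1}$. I would split at $\tau=1$. On $[1,\infty)$ the $L^p$-contractivity of $P^\K_\tau$ guaranteed by Lemma \ref{L:PtK}(v) (which is where \eqref{trace} enters decisively) gives $\|P^\K_\tau u - u\|_{L^p(\RNu)} \le 2\|u\|_{L^p(\RNu)}$, and the tail integral converges because $\frac{3-a}{2}>1$. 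On $(0,1]$ Lemma \ref{P:Kdiff} applied with $\K u\in \S\subset L^p$ produces the linear rate $\|P^\K_\tau u - u\|_{L^p(\RNu)} \le \tau\,\|\K u\|_{L^p(\RNu)}$, and the integrand is then controlled by $\tau^{-\frac{1-a}{2}}\in L^1(0,1)$ since $\frac{1-a}{2}<1$. The overall bound is $O(z^{1-a})=O(z^{2s})\to 0$.

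For the Neumann-type condition \eqref{nconvKaL2}, I would differentiate \eqref{beauty00} to recover the identity \eqref{wow} and observe that the defining formula \eqref{newclothes} matches the first integral therein. Under \eqref{trace}, Remark \ref{R:L2K} guarantees $(-\K)^s u \in L^p(\RNu)$, so the difference in \eqref{nconvKaL2} makes sense in $L^p$. Taking $L^p(\RNu)$-norms and applying Minkowski's integral inequality reduces matters to showing that the two quantities $I(z)$ and $II(z)$ from the proof of Theorem \ref{T:epKinfty}, now interpreted with $L^p(\RNu)$-norms in place of $L^\infty(\RNu)$-norms, both tend to zero as $z\to 0^+$. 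For $II(z)$ a direct estimate gives $O(z^{1+a})\to 0$, while for $I(z)$ Lebesgue dominated convergence applies with the same dominating function as before, built from Lemma \ref{L:PtK}(v) on $[1,\infty)$ and Lemma \ref{P:Kdiff} on $(0,1]$.

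The main obstacle is essentially bookkeeping at the level of Minkowski's integral inequality, whose legitimacy requires the dominating $\tau$-integrands to lie in $L^1(0,\infty)$. This is the precise point where hypothesis \eqref{trace} is indispensable: without it, by Lemma \ref{L:PtK}(iv), $\|P^\K_\tau u\|_{L^p(\RNu)}$ could blow up like $e^{-\tau\tr B/p}$ for large $\tau$ when $\tr B <0$, destroying the convergence of the tail integrals and hence the argument. Once the $L^p$-contractivity from Lemma \ref{L:PtK}(v) is in hand, the remainder is a routine transcription of the proof of Theorem \ref{T:epKinfty}.
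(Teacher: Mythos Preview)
Your proposal is correct and follows essentially the same approach as the paper's own proof: the paper also observes that the smoothness of $U$ and the equation $\K_a U=0$ carry over from Theorem \ref{T:epKinfty}, invokes Remark \ref{R:L2K} to ensure $(-\K)^s u\in L^p(\RNu)$, and then says that the proofs of \eqref{convKaL2} and \eqref{nconvKaL2} proceed exactly as those of \eqref{convKainfty} and \eqref{nconvKainfty} with $L^\infty$ norms replaced by $L^p$ norms, using the $L^p$-contractivity from Lemma \ref{L:PtK}(v) for large $\tau$ and the rate in Lemma \ref{P:Kdiff} near zero. Your write-up is in fact more explicit than the paper's (you spell out the role of Minkowski's integral inequality and the splitting at $\tau=1$), but the underlying argument is identical.
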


\begin{proof}
We begin by observing that, in view of Remark \ref{R:L2K}, the assumption \eqref{trace} guarantees that $(-\K)^s u\in L^p(\RNu)$. Next, since the first part of the theorem has already been established in the proof of Theorem \ref{T:epKinfty} we only need to show that \eqref{convKaL2} and \eqref{nconvKaL2} hold. Now, the proof of these facts proceeds exactly as in the proof of \eqref{convKainfty} and \eqref{nconvKainfty}, except that we must replace $L^\infty$ norms with $L^p$ ones, which we can do since by (v) in Lemma \ref{L:PtK} we know that the semigroup $P^\K_\tau$ is contractive in $L^p(\RNu)$. For the integrals near zero, say on the interval $(0,1)$, we use the crucial convergence rate in Lemma \ref{P:Kdiff}, and everything proceeds as in the proof of Theorem \ref{T:epKinfty}. 

\end{proof}


\section{The extension problem for the nonlocal operator $(-\mathscr A)^s$}\label{S:fracA}

In this last section we use the results of Section \ref{S:epKs} and Bochner's subordination to solve the extension problem for the fractional powers \eqref{As} of the hypoelliptic operators $\mathscr A$ which constitute the ``diffusive" part of the H\"ormander operators $\K$ in \eqref{K0}. Since once the properties of the relevant Poisson kernel are established the details are completely analogous to those in Theorems \ref{T:epKinfty} and \ref{T:epKL2}, we will skip them altogether. 

We consider the space $\Rnp = \R^N \times (0,\infty)$, and use the letters $(X,z), (Y,\zeta)$, etc. to indicate generic points in such space. For any number $a\in (-1,1)$ we now consider the following partial differential operator in $\Rnp$
\begin{equation}\label{extA}
\sA_a \overset{def}{=} z^a \left(\sA + \Ba\right).
\end{equation}
Again in analogy with \cite{CS}, when $a = 1-2s$ we call the operator $\sA_a$ in \eqref{extA} the \emph{extension operator} for $(-\sA)^s$ in \eqref{As}. We now introduce the following.

\begin{definition}\label{D:extpoisson}
Given any $a\in (-1,1)$, we define the \emph{Poisson kernel} for the operator $\sA_a$ in \eqref{extA} above as
\begin{equation}\label{slPK}
\Po^{(a)}(X,Y,z) =  \int_0^\infty P^{(a)}_z(X,Y,t)  dt,\qquad X,Y\in\RN,\,\, z>0,
\end{equation}
where the function $P^{(a)}_z(X,Y,t) $ is as in Definition \ref{D:PaK}.
\end{definition} 

A first basic property of the kernel $\Po^{(a)}(X,Y,z) $ is expressed by the next result.

\begin{proposition}\label{P:Ka1}
For every $X\in \R^N$ and $z>0$, one has
\[
\int_{\R^N} \Po^{(a)}(X,Y,z) dY = 1.
\]
\end{proposition}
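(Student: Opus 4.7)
The plan is to mimic the strategy used for Proposition \ref{P:Pa1}, exploiting the product structure of the Poisson kernel $P^{(a)}_z(X,Y,t)$ established in \eqref{prodPa}. The key observation is that the $(X,Y)$-dependence and the $(z,t)$-dependence of $P^{(a)}_z(X,Y,t)$ factor completely: we have $P^{(a)}_z(X,Y,t)=g^{(a)}(z,t)\,p(X,Y,t)$, where $g^{(a)}(z,t)$ is the positive, Gaussian-decaying function in \eqref{ga}.

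First I would substitute Definition \ref{D:extpoisson} into the integral and use Tonelli's theorem (all integrands are nonnegative) to interchange the order of integration:
\[
\int_{\mathbb{R}^N}\Po^{(a)}(X,Y,z)\,dY = \int_{\mathbb{R}^N}\int_0^\infty g^{(a)}(z,t)\,p(X,Y,t)\,dt\,dY = \int_0^\infty g^{(a)}(z,t)\left(\int_{\mathbb{R}^N} p(X,Y,t)\,dY\right)dt.
\]
Next I would invoke part (i) of Lemma \ref{L:Pt}, which gives $\int_{\mathbb{R}^N} p(X,Y,t)\,dY = 1$ for every $X\in\mathbb{R}^N$ and $t>0$. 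This reduces the double integral to $\int_0^\infty g^{(a)}(z,t)\,dt$. Finally, the already-established identity \eqref{ga1} states that this last integral equals $1$ for every $z>0$, which yields the claim.

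There is no substantive obstacle: the only point requiring care is the justification of Tonelli, which is immediate since $g^{(a)}\geq 0$ and $p\geq 0$. I would also note that this is, in effect, a one-dimensional reduction of Proposition \ref{P:Pa1}: integrating out the $t$-variable in the identity $\int_0^\infty\int_{\mathbb{R}^N} P^{(a)}_z(X,Y,t)\,dY\,dt = 1$ in a different order produces exactly the statement here.
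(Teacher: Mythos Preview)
Your proof is correct and follows essentially the same route as the paper: Tonelli to swap the $dY$ and $dt$ integrals, then the stochastic completeness of $p(X,Y,t)$ together with the normalization \eqref{ga1}. The only cosmetic difference is that the paper compresses the last two steps into a single appeal to Proposition~\ref{P:Pa1}, whereas you unpack that proposition explicitly via \eqref{prodPa}, Lemma~\ref{L:Pt}(i), and \eqref{ga1}.
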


\begin{proof}
Using \eqref{slPK} and Tonelli's theorem we find for every $X\in \R^N$
\begin{align*}
& \int_{\R^N} \Po^{(a)}(X,Y,z) dY = \int_{\R^N} \int_0^\infty P^{(a)}_z(X,Y,t)  dt dY = \int_0^\infty \int_{\R^N}  P^{(a)}_z(X,Y,t)   dY dt = 1,
\end{align*}
where in the last equality we have used Proposition \ref{P:Pa1}. 

\end{proof}

We now show that the kernel  $\Po^{(a)}(X,Y,z)$ is a solution of the extension operator $\sA_a$ in \eqref{extA} above. 

\begin{proposition}\label{P:PKextsolution} 
Fix $Y\in \R^N$. The function $(X,z)\to \Po^{(a)}(X,Y,z)$ belongs to $C^\infty(\RN\times (0,\infty))$. Furthermore, for every $X\not= Y$ and $z>0$ one has
\[
\sA_{a} \Po^{(a)}(X,Y,z) = 0.
\]
\end{proposition}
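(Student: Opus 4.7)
The plan is to exploit the fundamental equation proved in Proposition \ref{P:pdePa}, namely $(\sA + \Ba - \p_t) P^{(a)}_z(X,Y,t) = 0$ for $t>0$, and pass to the integral $\int_0^\infty \cdot \, dt$, converting the spatial operator $\sA + \Ba$ acting on $\Po^{(a)}$ into a $t$-boundary term that vanishes by Lemma \ref{L:limits}.

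First I would address the smoothness claim. Fix $Y\in\RN$ and $(X_0,z_0)\in \RN\times(0,\infty)$, and pick a small neighborhood $V$ of $(X_0,z_0)$ with $\inf_V z > 0$. Using the factorization \eqref{prodPa}, $P^{(a)}_z(X,Y,t) = g^{(a)}(z,t)\, p(X,Y,t)$, one has to dominate arbitrary $(X,z)$-derivatives of this product, uniformly for $(X,z)\in V$, by an $L^1(0,\infty)$ function of $t$. The $z$-derivatives fall on the explicit Gaussian $g^{(a)}$, which together with its derivatives has Gaussian decay both as $t\to 0^+$ (thanks to the factor $e^{-z^2/(4t)}$ with $z$ bounded below) and polynomial decay as $t\to\infty$; the troublesome behavior comes from $\nabla^k_X p(X,Y,t)$, for which one already has formula \eqref{Xderp} and an iterated version. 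As in the smoothness portion of the proof of Theorem \ref{T:epKinfty}, for small $t$ one uses the polynomial behavior $\det C(t) \sim t^{D_0}$ and $\|C^{-1}(t)\|$ bounded by a negative power of $t$, absorbed by the Gaussian decay of $g^{(a)}$; for large $t$ one uses the monotonicity $C(t)\ge C(1)$ (and $tK(t)\ge K(1)$ in the form \eqref{KC}) to get a $t$-independent bound on the polynomial prefactor, which is then killed by the $t^{-(3-a)/2}$ factor. This allows differentiation under the integral to arbitrary order, giving $\Po^{(a)}(\cdot,Y,\cdot)\in C^\infty(\RN\times(0,\infty))$.

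Having justified the interchange, the main identity is the brief calculation
\begin{align*}
\sA_a \Po^{(a)}(X,Y,z) &= z^a(\sA+\Ba)\int_0^\infty P^{(a)}_z(X,Y,t)\,dt\\
&= z^a\int_0^\infty (\sA+\Ba) P^{(a)}_z(X,Y,t)\,dt\\
&= z^a\int_0^\infty \p_t P^{(a)}_z(X,Y,t)\,dt,
\end{align*}
where the last equality uses $\K P^{(a)}_z = (\sA - \p_t) P^{(a)}_z$ together with $\K_a P^{(a)}_z = 0$ from Proposition \ref{P:pdePa}, i.e.\ $(\sA+\Ba)P^{(a)}_z = \p_t P^{(a)}_z$. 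Since $X\neq Y$, the function $t\mapsto P^{(a)}_z(X,Y,t)$ is continuous on $[0,\infty)$ with the two endpoint limits computed in Lemma \ref{L:limits}:
\[
\lim_{t\to 0^+} P^{(a)}_z(X,Y,t) = \lim_{t\to\infty} P^{(a)}_z(X,Y,t) = 0.
\]
Consequently, $\int_0^\infty \p_t P^{(a)}_z(X,Y,t)\,dt = 0$ by the fundamental theorem of calculus, which yields $\sA_a \Po^{(a)}(X,Y,z) = 0$.

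The main obstacle is the justification of differentiation under the integral sign, especially for $X$-derivatives: the kernel $p(X,Y,t)$ and its $X$-derivatives blow up through $C^{-1}(t)$ as $t\to 0^+$, and the control for large $t$ needs the matrix monotonicity inherent in the definition of $C(t)$. Both difficulties, however, have already been surmounted in the course of the proof of Theorem \ref{T:epKinfty}, so the present proposition reduces to citing those estimates and carrying out the one-line calculation above.
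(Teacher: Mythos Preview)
Your proof is correct and follows essentially the same strategy as the paper: justify differentiation under the integral sign by the estimates already established in the proof of Theorem \ref{T:epKinfty}, then use the PDE satisfied by $P^{(a)}_z$ together with Lemma \ref{L:limits} to see that the spatial operator produces an exact $t$-derivative with vanishing endpoint contributions. The only cosmetic difference is that the paper splits $P^{(a)}_z=g^{(a)}p$, applies $\sA p=\p_t p$ and $\Ba g^{(a)}=\p_t g^{(a)}$ separately, and integrates by parts on one factor, whereas you invoke Proposition \ref{P:pdePa} directly to get $(\sA+\Ba)P^{(a)}_z=\p_t P^{(a)}_z$ and apply the fundamental theorem of calculus in one stroke; the content is identical.
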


\begin{proof}
We show that we can differentiate under the integral sign in the definition \eqref{slPK} and prove that $(X,z)\to \Po^{(a)}(X,Y,z)$ belongs to $C^\infty(\RN \times (0,\infty))$. To do this, we have on one side that the required bound on the $z$-derivatives is straightforward (since $z>0$). On the other side, we need to be careful when we differentiate with respect to the $X$-variables. However, this can be done by arguing as in the proof of Theorem \ref{T:epKinfty}, where we establish the right bounds of $\nabla_X P_z^{(a)}(X,Y,t)$ respectively for small values and large values of $t$. In this way we accomplish the first part of the statement. 
Furthermore, using \eqref{extA} we find for any $z>0$ and $X\not = Y$
\[
z^{-a} \sA_{a} \Po^{(a)}(\cdot,Y,z) = \sA \Po^{(a)}(\cdot,Y,z) + \Ba \Po^{(a)}(\cdot,Y,z).
\]
To compute the quantities in the r.h.s., we differentiate under the integral sign obtaining 
\begin{align}\label{KaLL}
& z^{-a} \sA_{a} \Po^{(a)}(X,Y,z) = \int_0^\infty g^{(a)}(z,t) \sA p(X,Y,t) dt
+ \int_0^\infty p(X,Y,t) \Ba g^{(a)}(z,t) dt.
\end{align}
To compute the first integral in the right-hand side of \eqref{KaLL} we now use the equation satisfied by $p(X,Y,t)$, $\mathscr A p(X,Y,t) = \p_t p(X,Y,t)$. This gives for every $X\not=Y$ and $t>0$,
\begin{align}\label{KaLL2}
& \int_0^\infty g^{(a)}(z,t) \sA p(X,Y,t) dt = \int_0^\infty g^{(a)}(z,t) \p_t p(X,Y,t) dt
\\
& \ \ \ \ \ \ \ \text{(integrating by parts)}
\notag\\
& =  P^{(a)}_z(X,Y,\infty) - P^{(a)}_z(X,Y,0) - \int_0^\infty \p_t g^{(a)}(z,t) \p_t p(X,Y,t) dt
\notag\\
& = - \int_0^\infty p(X,Y,t) \Ba g^{(a)}(z,t)  dt,
\notag
\end{align}
where in the last equality we have used the crucial Lemma \ref{L:limits} and \eqref{eqga}. Substituting \eqref{KaLL2} into \eqref{KaLL} we reach the desired conclusion.

\end{proof}
 
In closing, we solve the extension problem for the operator $(-\sA)^s$.

\begin{definition}\label{D:ep}
For $0<s<1$, let $a= 1-2s$. The extension problem in $\Rnp$ for the nonlocal operator $(-\sA)^s$, is, for a given $\vf\in\So$, the following:
\begin{equation}\label{ep}
\begin{cases}
\sA_a U = 0,\ \ \ \ \ \ \ \ \ \ \ \ \ \ \ \text{in}\ \Rnp,
\\
U(X,0) = \vf(X)\ \ \ \ \ \ \ X\in \R^N.
\end{cases}
\end{equation}
\end{definition}

Our final result is the counterpart of Theorem \ref{T:epKinfty}. Since the details are completely analogous we omit them altogether. 

\begin{theorem}\label{T:extLinfty}
Given $\vf\in \So$ consider the function $U$ defined by 
\begin{equation}\label{pos}
U(X,z) = \int_{\R^{N}} \Po^{(a)}(X,Y,z) \vf(Y) dY.
\end{equation}
One has $U\in C^\infty(\RN\times (0,\infty))$ and solves the extension problem \eqref{ep}. By this we mean that $\sA_a U = 0$ in $\Rnp$, and we have $\underset{z\to 0^+}{\lim} U(\cdot;z) = \vf$ in $L^\infty$.
Moreover, we also have in $L^\infty$
\begin{equation}\label{nconvAinfty}
- \frac{2^{-a} \Gamma\left(\frac{1-a}2\right)}{\Gamma\left(\frac{1+a}2\right)}  \underset{z\to 0^+}{\lim} \paa U(\cdot,z) = (-\mathscr A)^s u.
\end{equation}
If furthermore the hypothesis \eqref{trace} is satisfied, then the convergence is also in $L^p$ for any $1\leq p<\infty$.
\end{theorem}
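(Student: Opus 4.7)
The plan is to mirror the proofs of Theorems \ref{T:epKinfty} and \ref{T:epKL2}, with the subordination \eqref{slPK} playing the role of the time integration that passes from the evolutive problem for $\K$ to the stationary one for $\sA$. First I would apply Tonelli in \eqref{pos}--\eqref{slPK} to express
\[
U(X,z) = \int_0^\infty g^{(a)}(z,t)\, P_t\varphi(X)\, dt,
\]
with $g^{(a)}$ the heat-Bessel weight \eqref{ga}. The $C^\infty$ regularity of $U$ on $\RN \times (0,\infty)$ follows by differentiating under the integral sign, the Gaussian decay of $g^{(a)}$ at $z>0$ and the uniform estimates on $\nabla_X^k p(X,Y,t)$ for small and large $t$ worked out in the proof of Theorem \ref{T:epKinfty} supplying the majorants. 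The equation $\sA_a U = 0$ is then a direct consequence of Proposition \ref{P:PKextsolution}.

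For the Dirichlet convergence $U(\cdot,z)\to \varphi$ in $L^\infty$, I would use \eqref{ga1}, which gives $\int_0^\infty g^{(a)}(z,t)\, dt = 1$, to rewrite
\[
U(X,z) - \varphi(X) = \int_0^\infty g^{(a)}(z,t)\bigl[P_t\varphi(X) - \varphi(X)\bigr] dt,
\]
and then split at $t=1$. On $(1,\infty)$ the integrand is dominated by $2\|\varphi\|_\infty\, z^{1-a}/t^{(3-a)/2}$, which is integrable since $a<1$ and contributes $O(z^{1-a})$. On $(0,1)$ I would invoke Lemma \ref{L:Lprate} (noting $\sA\varphi \in \So$) to get $\|P_t\varphi - \varphi\|_\infty \le Ct$, producing a bound of order $z^{1-a}\int_0^1 t^{-(1-a)/2}\,dt$, finite since $a>-1$. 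Since $1-a>0$, both pieces vanish as $z \to 0^+$. The $L^p$-statement under \eqref{trace} is identical, with Lemma \ref{L:Pt}(iv) replacing the $L^\infty$-contractivity.

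The Neumann-type limit \eqref{nconvAinfty} is the technical core. Differentiating the representation of $U$ in $z$ and computing $\p_z g^{(a)}$ explicitly gives
\begin{align*}
-\frac{2^{-a}\G(\frac{1-a}{2})}{\G(\frac{1+a}{2})}\, z^a\p_z U(X,z) &= -\frac{1-a}{2\G(\frac{1+a}{2})}\int_0^\infty \frac{e^{-z^2/(4t)}}{t^{(3-a)/2}} P_t\varphi(X)\,dt \\
&\quad + \frac{z^2}{4\G(\frac{1+a}{2})}\int_0^\infty \frac{e^{-z^2/(4t)}}{t^{(5-a)/2}} P_t\varphi(X)\,dt.
\end{align*}
Differentiating \eqref{ga1} in $z$ yields $\int_0^\infty z^a\p_z g^{(a)}(z,t)\,dt = 0$, so $P_t\varphi(X)$ may be replaced by $P_t\varphi(X) - \varphi(X)$ in both integrals without changing their values. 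Comparing with the definition \eqref{As} (for $s = (1-a)/2$), I would then argue, as in \eqref{wow}--\eqref{newclothes}, that the first integral converges to $(-\sA)^s\varphi(X)$ uniformly in $X$ by dominated convergence, the integrable majorant being supplied by Lemma \ref{L:Lprate} near $t=0$ and by $\|P_t\varphi\|_\infty \le \|\varphi\|_\infty$ near $t=\infty$. The second integral I would treat separately: using $\|P_t\varphi - \varphi\|_\infty \le Ct$ on $(0,1)$ and the change of variable $u = z^2/(4t)$, it is $O(z^{1+a})$, while on $(1,\infty)$ it is $O(z^2)$; both vanish as $z\to 0^+$ since $a>-1$. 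This establishes \eqref{nconvAinfty} in $L^\infty$, and in $L^p$ under \eqref{trace} by the parallel estimates.

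The main obstacle, and the one place where the argument is more than a cosmetic rewriting of Theorem \ref{T:epKinfty}, is the bookkeeping of the two terms produced by $\p_z g^{(a)}$ and the verification that the spurious $z^2$-piece does vanish in the limit. This rests delicately on the interplay between the linear rate from Lemma \ref{L:Lprate} for small $t$ and the Gaussian weight $e^{-z^2/(4t)}$; without either of these inputs the boundary trace of $-z^a\p_z U$ would fail to match $(-\sA)^s \varphi$.
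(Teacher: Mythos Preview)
Your proposal is correct and follows essentially the same route as the paper, which in fact omits the proof entirely, declaring the details ``completely analogous'' to Theorems \ref{T:epKinfty} and \ref{T:epKL2}. Your key representation $U(X,z)=\int_0^\infty g^{(a)}(z,t)\,P_t\varphi(X)\,dt$ is exactly the analogue of \eqref{rem}, and from there the Dirichlet and Neumann limits proceed as in \eqref{beauty00}--\eqref{wow} with $P_t$ and Lemma \ref{L:Lprate} replacing $P^\K_\tau$ and Lemma \ref{P:Kdiff}. One small wording issue: the identity $\int_0^\infty z^a\partial_z g^{(a)}(z,t)\,dt=0$ only tells you that the \emph{sum} of the two integrals is unchanged when $P_t\varphi$ is replaced by $P_t\varphi-\varphi$, not each separately; but since you then analyse the two \emph{new} integrals (with $P_t\varphi-\varphi$) individually, the argument is sound. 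The paper avoids this minor subtlety by subtracting first (via \eqref{ga1}) and differentiating afterwards, so that both terms in \eqref{wow} carry $P^\K_\tau u - u$ from the outset.
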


In closing we mention that when $\sA$ is a nondegenerate Ornstein-Uhlenbeck operator $\Delta + <BX,\nabla>$, then some properties of $(-\mathscr{A})^{1/2}$ were obtained by Priola in \cite{Pri} in his study of the Dirichlet problem in half-spaces.




\bibliographystyle{amsplain}

\end{document}